\newdimen\rh@wd
\newdimen\rh@hta
\newdimen\rh@htb
\newbox\rh@box
\def\rh@measure#1{\setbox\rh@box=\hbox{$#1$}\rh@wd=\wd\rh@box \rh@hta=\ht\rh@box}
\def\widecheck#1{\rh@measure{#1}%
  \setbox\rh@box=\hbox{$\widehat{\vrule height \rh@hta width\z@ \kern\rh@wd}$}%
  \rh@htb=\ht\rh@box \advance\rh@htb\rh@hta \advance\rh@htb\p@
  \ooalign{$\vrule height \ht\rh@box width\z@ #1$\cr
           \raise\rh@htb\hbox{\scalebox{1}[-1]{\box\rh@box}}\cr}}
\let\save@mathaccent\mathaccent
\newcommand*\if@single[3]{%
  \setbox0\hbox{${\mathaccent"0362{#1}}^H$}%
  \setbox2\hbox{${\mathaccent"0362{\kern0pt#1}}^H$}%
  \ifdim\ht0=\ht2 #3\else #2\fi
  }
\newcommand*\rel@kern[1]{\kern#1\dimexpr\macc@kerna}
\newcommand*\widebar[1]{\@ifnextchar^{{\wide@bar{#1}{0}}}{\wide@bar{#1}{1}}}
\newcommand*\wide@bar[2]{\if@single{#1}{\wide@bar@{#1}{#2}{1}}{\wide@bar@{#1}{#2}{2}}}
\newcommand*\wide@bar@[3]{%
  \begingroup
  \def\mathaccent##1##2{%
%Enable nesting of accents:
    \let\mathaccent\save@mathaccent
%If there's more than a single symbol, use the first character instead (see below):
    \if#32 \let\macc@nucleus\first@char \fi
%Determine the italic correction:
    \setbox\z@\hbox{$\macc@style{\macc@nucleus}_{}$}%
    \setbox\tw@\hbox{$\macc@style{\macc@nucleus}{}_{}$}%
    \dimen@\wd\tw@
    \advance\dimen@-\wd\z@
%Now \dimen@ is the italic correction of the symbol.
    \divide\dimen@ 3
    \@tempdima\wd\tw@
    \advance\@tempdima-\scriptspace
%Now \@tempdima is the width of the symbol.
    \divide\@tempdima 10
    \advance\dimen@-\@tempdima
%Now \dimen@ = (italic correction / 3) - (Breite / 10)
    \ifdim\dimen@>\z@ \dimen@0pt\fi
%The bar will be shortened in the case \dimen@<0 !
    \rel@kern{0.6}\kern-\dimen@
    \if#31
      \overline{\rel@kern{-0.6}\kern\dimen@\macc@nucleus\rel@kern{0.4}\kern\dimen@}%
      \advance\dimen@0.4\dimexpr\macc@kerna
%Place the combined final kern (-\dimen@) if it is >0 or if a superscript follows:
      \let\final@kern#2%
      \ifdim\dimen@<\z@ \let\final@kern1\fi
      \if\final@kern1 \kern-\dimen@\fi
    \else
      \overline{\rel@kern{-0.6}\kern\dimen@#1}%
    \fi
  }%
  \macc@depth\@ne
  \let\math@bgroup\@empty \let\math@egroup\macc@set@skewchar
  \mathsurround\z@ \frozen@everymath{\mathgroup\macc@group\relax}%
  \macc@set@skewchar\relax
  \let\mathaccentV\macc@nested@a
%The following initialises \macc@kerna and calls \mathaccent:
  \if#31
    \macc@nested@a\relax111{#1}%
  \else
%If the argument consists of more than one symbol, and if the first token is
%a letter, use that letter for the computations:
    \def\gobble@till@marker##1\endmarker{}%
    \futurelet\first@char\gobble@till@marker#1\endmarker
    \ifcat\noexpand\first@char A\else
      \def\first@char{}%
    \fi
    \macc@nested@a\relax111{\first@char}%
  \fi
  \endgroup
}
\newcommand{\RR}{{\mathbb{R}}}
\newtheorem{example}[theorem]{Example}
\title{Computationally enhanced projection methods 
for symmetric Sylvester and Lyapunov matrix equations\thanks{Version of February 1, 2017}.
This research is supported in part by the FARB12SIMO grant of the Universit\`a di Bologna,
 and in part by INdAM-GNCS under the 
2015 Project  \emph{Metodi di regolarizzazione per problemi di ottimizzazione e applicazioni.}} 
\author{Davide Palitta\thanks{Dipartimento di Matematica, Universit\`a di Bologna, %
Piazza di Porta S. Donato, 5, I-40127 Bologna, Italy ({\tt davide.palitta3@unibo.it}).}  
\and Valeria Simoncini\thanks{Dipartimento di Matematica, Universit\`a di Bologna, 
Piazza di Porta S. Donato, 5, I-40127 Bologna, Italy ({\tt valeria.simoncini@unibo.it}),
and IMATI-CNR, Pavia, Italy.}}
\begin{document}
\bibliographystyle{siam}
\pagestyle{myheadings}
\markboth{D. Palitta and V.~Simoncini}{Enhanced projection methods
for symmetric Sylvester equations}

\maketitle

\begin{abstract}
In the numerical treatment of large-scale Sylvester and Lyapunov equations,
projection methods require solving a reduced problem to check convergence.
As the approximation space expands, this solution takes an increasing portion of
the overall computational effort.
When data are symmetric, we show that the Frobenius norm of the residual matrix 
can be computed
at significantly lower cost than with available methods, without 
explicitly solving the reduced problem.
For certain classes of problems,
the new residual norm expression combined with a memory-reducing device
make classical Krylov strategies competitive with respect to more recent
projection methods.
Numerical experiments illustrate the effectiveness of the new implementation for
standard and extended Krylov subspace methods.
\end{abstract}
%\keywords{}
%\subclass{}

\begin{keywords}
Sylvester equation, Lyapunov equation, projection methods, Krylov subspaces
\end{keywords}

\begin{AMS}
47J20, 65F30, 49M99, 49N35, 93B52
\end{AMS}

%%%%%%%%%%%%%%%%%%%%%%%%%%%%%%%%%%%%
\section{Introduction}
Consider the Sylvester matrix equation
\begin{equation}\label{Lyapeq.}
 AX+XB+C_1C_2^T=0,\quad A\in\mathbb{R}^{n_1\times n_1},\,\, B\in\mathbb{R}^{n_2\times n_2},\,\,
 C_1\in\mathbb{R}^{n_1\times s}, C_2\in\mathbb{R}^{n_2\times s}
\end{equation}
where $A, B$ are very large and sparse, symmetric negative definite matrices, while 
$C_1, C_2 \ne 0$ are tall, that is $s\ll n_1, n_2$.
Under these hypotheses, there exists a unique solution matrix $X$. 
%positive semidefinite solution $X$ \cite{Snyders1970} (CHECK REF).
This kind of matrix equation arises in many applications, from the analysis of continuous-time linear dynamical systems
to eigenvalue problems and the discretization of self-adjoint elliptic PDEs; see, e.g., \cite{Antoulas.05},
 and \cite{Simoncini2014} for a recent survey.
Although $A$ and $B$ are sparse, the solution $X$ is in general
dense so that storing it may be unfeasible for large-scale problems.
On the other hand, under certain hypotheses on the spectral distribution of $A$ and $B$,
 the singular values of $X$ present a fast decay, see, e.g., \cite{Penzl2000}, thus
justifying the search for a low-rank approximation $\widetilde X=Z_1Z_2^T$ to $X$ so that only these two
 tall matrices are actually computed and stored.
To simplify the presentation of what follows, from now on we will focus on the case of
the Lyapunov matrix equation, that is $B=A$ ($n\equiv n_1=n_2$) and $C\equiv C_1=C_2$, so that $X$ will be square, symmetric
and positive semidefinite \cite{Snyders1970}. In later sections we will describe how to 
naturally treat the general case with $A$ and $B$ distinct and not necessarily with the
same dimensions, and different $C_1, C_2$.

For the Lyapunov equation,
projection methods compute the numerical solution $\widetilde X$ in a sequence of nested subspaces,
$\mathcal{K}_m\subseteq \mathcal{K}_{m+1} \subseteq \RR^n$, $m\ge 1$. The approximation, usually denoted by $X_m$, is
written as the product of 
low-rank matrices $X_m= V_m Y_m V_m^T$ where $\mathcal{K}_m={\rm Range}(V_m)$ and
the columns of $V_m$ are far fewer than $n$.
The quality and effectiveness of the approximation process depend on how
much spectral information is captured by $\mathcal{K}_m$, without the space dimension being too large. 
The matrix $Y_m$ is determined by solving a related (reduced) problem, whose dimension depends on the approximation
space dimension.  To check convergence, the residual matrix norm is monitored at each iteration
by using $Y_m$ but
without explicitly computing the large and dense residual matrix $R_m = AX_m+X_mA+CC^T$ \cite{Simoncini2014}.
The solution of the reduced problem is meant to account for a low percentage of the
overall computation cost. Unfortunately, this cost grows nonlinearly with the space dimension,
therefore  solving the reduced problem may become very expensive if a large approximation space
is needed.

A classical choice for $\mathcal{K}_m$ is the (standard) block Krylov subspace 
$\mathbf{K}^\square_m(A,C):=\mbox{Range}\{[C,AC,\dots,A^{m-1}C]\}$
\cite{Gutknecht2006}, whose basis can be generated iteratively
by means of the block Lanczos procedure.   Numerical experiments
 show that $\mathbf{K}^\square_m(A,C)$
may need to be quite large before a satisfactory approximate solution is obtained \cite{Penzl2000a},\cite{Simoncini2007}.
This large number of iterations causes high computational and memory demands.
% both
%an increasing cost for computing the residual norm and a often unfeasible memory requirement for storing the basis,
%i.e., the dense matrix $V_m$. 
More recent alternatives include projection onto extended or 
rational Krylov subspaces \cite{Simoncini2007},\cite{Druskin.Simoncini.11}, 
or the use of explicit iterations for the approximate solution \cite{Penzl2000a}; see 
the thorough presentation in \cite{Simoncini2014}. Extended and more generally rational
Krylov subspaces contain richer spectral
information, that allow for a significantly lower subspace dimension,
at the cost of more expensive computations per iteration, since $s$ system solves with
the coefficients matrix are required at each iteration.

%The Standard Krylov subspace method was thus abandoned in the large-scale matrix equation context and
%several numerical strategies were developed in the last decades to solve (\ref{Lyapeq.}),
%see, e.g., the recent survey \cite{Simoncini2014}.
%In 2007, Simoncini introduced in \cite{Simoncini2007} the employment 
%of the Extended Krylov subspace 
%$\mathbf{EK}^\square_m(A,B):=\mbox{Range}\{[B,A^{-1}B,\dots,A^{m-1}B,A^{-m}B]\}$ for solving (\ref{Lyapeq.}).
%Since this approximation space contains much more spectral information than $\mathbf{K}^\square_m(A,B)$, the resulting 
%projection methods typically provides a faster convergence in terms of number of iterations, see, e.g., \cite{Simoncini2007}, 
%in spite of a more expensive basis construction due to linear system solves.

We devise a strategy that significantly  reduces the computational cost of evaluating the residual norm for both 
$\mathbf{K}^\square_m$ and the extended Krylov subspace 
$\mathbf{EK}^\square_m(A,C):=\mbox{Range}\{[C,A^{-1}C,\dots,A^{m-1}C,A^{-m}C]\}$.
In case of $\mathbf{K}^\square_m$ a ``two-pass'' strategy is implemented to avoid
storing the whole basis $V_m$; % but only the last $3s$ vectors of the Lanczos iteration;
see \cite{Kressner2008} for earlier use of this device in the same setting, and, e.g., \cite{Frommer2008}
in the matrix function context.
%This strategy is in general not necessary for $\mathbf{EK}^\square_m$
%as the convergence usually occurs in a very moderate number of iterations.

%MODIFY In Section \ref{SK_twopass}
%we illustrate an implementation of a ``two-pass'' strategy, already presented in \cite{Kressner2008}. This memory-saving
%procedure allows us to store only a portion of the whole basis during the construction of the approximation space,
%in particular, only the $3s$ basis vectors needed in the orthogonalization process are stored.
%The matrix $Z_m=V_m\widehat Y$ is then computed on-the-fly during a second Lanczos pass. 

Throughout the paper, Greek bold letters ($\pmb{\alpha}$) will denote $s\times s$ matrices, 
while roman capital letters ($A$)
larger ones. In particular $E_i\in\mathbb{R}^{sm\times s}$ will denote the $i$th block of $s$ columns 
of the identity matrix $I_{sm}\in\mathbb{R}^{sm\times sm}$.
Scalar quantities will be denoted by Greek letters ($\alpha$).

Here is a synopsis of the paper. In Section~\ref{Projection methods} the basic tools of projection methods for solving 
(\ref{Lyapeq.}) are recalled. In Section \ref{SK_residual} 
we present a cheap residual norm computation whose implementation is discussed in Section~\ref{sec:Algorithm}.
The two-pass strategy for $\mathbf{K}^\square_m(A,C)$ is examined in Section \ref{SK_twopass}.
In Section~\ref{Extended} we extend the residual computation 
to $\mathbf{EK}^\square_m(A,C)$. Section \ref{Sylvester_intro} discusses the generalization
of this procedure to the case of the Sylvester equation in (\ref{Lyapeq.}).
In particular, Section~\ref{The Sylvester case} analyzes the case when both coefficient matrices are large,
while Section \ref{LargeAsmallB} discusses problems where one of them has small dimensions.
Numerical examples illustrating the effectiveness of our strategy are reported in 
Section~\ref{Numerical Experiments},
while our conclusions are given in Section \ref{Conclusions}.

%%%%%%%%%%%%%%%%%%%%%%%%%%%%%%%%%%%%%%%%%%%%%%%%%%%%%%%%%%%%%%%%%%%%%%%%%%%%%%%%%%%%%%%%%%%%%%%%%%%%%%%%%%%%%%%%%%%%

\section{Galerkin projection methods}\label{Projection methods}

Consider a subspace
$\mathcal{K}_m$ spanned by the orthonormal columns of the
 matrix $V_m=[\mathcal{V}_1,\dots,\mathcal{V}_m]\in\mathbb{R}^{n\times sm}$ 
and seek an approximate solution $X_m$ to (\ref{Lyapeq.}) of the form $X_m=V_mY_mV_m^T$ with $Y_m$ symmetric and 
positive semidefinite, and residual matrix $R_m=AX_m+X_mA+CC^T$.
With the matrix inner product
$$
\langle Q,P\rangle_F:=\mbox{trace}(P^TQ), \qquad Q,P\in\mathbb{R}^{n_1\times n_2},
$$
the matrix $Y_m$ can be determined by imposing an orthogonality (Galerkin) condition on the residual with respect to 
this inner product,
\begin{equation}\label{reseq.}
 R_m \perp\mathcal{K}_m \quad \Leftrightarrow\quad V_m^TR_mV_m=0.
\end{equation}
Substituting $R_m$ into (\ref{reseq.}), we obtain
$V_m^TAX_mV_m+V_m^TX_mAV_m+V_m^TCC^TV_m=0$,
that is
\begin{equation}\label{reseqbis}
 \left(V_m^TAV_m\right)Y_mV_m^TV_m+V_m^TV_mY_m\left(V_m^TAV_m\right)+V_m^TCC^TV_m = 0.
\end{equation}
We assume $\mbox{Range}(V_1)=\mbox{Range}(C)$, that is $C=V_1\pmb{\gamma}$ for some 
nonsingular
$\pmb{\gamma}\in\mathbb{R}^{s\times s}$. Since $V_m$ has orthonormal columns, $V_m^TC=E_1\pmb{\gamma}$ and 
equation (\ref{reseqbis}) can be written as
\begin{equation}\label{reducesLyap}
T_mY_m+Y_mT_m+E_1\pmb{\gamma\gamma}^TE_1^T = 0,
\end{equation}
where $T_m:=V_m^TAV_m$ is symmetric and negative definite. 
The orthogonalization procedure employed in building $V_m$ determines the 
sparsity pattern of $T_m$. In particular, for $\mathcal{K}_m=\mathbf{K}^\square_m(A,C)$, the block Lanczos process 
produces a block tridiagonal matrix $T_m$ with blocks of size $s$,
$$ T_m=\left(\begin{array}{lll}       
        \pmb{\tau}_{11} &\quad \pmb{\tau}_{12} &   \\
        \pmb{\tau}_{21} & \quad \pmb{\tau}_{22} & \quad \pmb{\tau}_{23}   \\
         &  \ddots &   \ddots \quad \quad  \ddots   \\
        &   \quad \ddots  &\quad \ddots \qquad \pmb{\tau}_{m-1,m} \\
    & &  \pmb{\tau}_{m,m-1}\quad  \pmb{\tau}_{m,m}  \\
       \end{array}\right).
$$
As long as $m$ is of moderate size, 
methods based on the Schur decomposition of the coefficient matrix $T_m$ can 
be employed to solve equation (\ref{reducesLyap}), see, e.g., \cite{Bartels1972}, 
\cite{Hammarling1982}. 

The last $s$ columns (or rows) of the solution matrix $Y_m$ are employed to compute the 
residual norm. In particular, letting $\underline{T}_m=V_{m+1}^TAV_{m}$, it was shown in \cite{Jaimoukha1994} that
the norm of the residual in (\ref{reseq.}) satisfies
\begin{equation}\label{reseq1.}
 \|R_m\|_F={\sqrt{2}}\|Y_m\underline{T}_m^TE_{m+1}\|_F={\sqrt{2}}\|Y_mE_m\pmb{\tau}^T_{m+1,m}\|_F.
\end{equation}
%%
%These methods are based on decompositions of the coefficient matrix $T_m$ and 
%therefore, .
The matrix $Y_m$ is determined by solving (\ref{reducesLyap}), and it is again symmetric and
positive semidefinite.
At convergence, the backward transformation $X_m=V_mY_mV_m^T$ is never explicitly computed or stored.
Instead, we factorize $Y_m$ as
%dense matrix. As the right-hand side in (\ref{reducesLyap}) is symmetric and positive semidefinite,
%similarly to the exact solution $X$ to (\ref{Lyapeq.}), also $Y_m$ is symmetric and positive semidefinite
%justifying its factorization,
%
\begin{eqnarray}\label{eqn:Y}
Y_m= \widehat Y \widehat Y^T, \quad \widehat Y\in\mathbb{R}^{sm\times sm},
\end{eqnarray}
from which a low-rank factor of $X_m$ is obtained as $Z_m=V_m\widehat Y\in\mathbb{R}^{n\times sm}$, $X_m= Z_mZ_m^T$.
The matrix $Y_m$ may be numerically rank deficient, and this can be exploited to further decrease
the rank of $Z_m$. We write the eigendecomposition of $Y_m$, $Y_m=W\Sigma W^T$ (with eigenvalues ordered
non-increasingly) and discard only the eigenvalues below
a certain tolerance, that is $\Sigma={\rm diag}(\Sigma_1, \Sigma_2)$, $W=[W_1, W_2]$ with $\|\Sigma_2\|_F \le \epsilon$
(in all our experiments we used $\epsilon=10^{-12}$).
Therefore, we define again
$Y_m\approx \widehat Y \widehat Y^T$, with $\widehat Y = W_1 \Sigma_1^{1/2} \in\mathbb{R}^{sm\times t}$, $t\leq sm$;
in this way,
$\|Y_m-\widehat Y \widehat Y^T\|_F\leq \epsilon$. Hence, we set
$Z_m=V_m\widehat Y\in\mathbb{R}^{sm\times t}$.
We notice that a significant rank reduction in $Y_m$ is an indication 
that all relevant information for generating $X_m$ is actually contained
in a subspace that is much smaller than $\mathbf{K}^\square_m(A,C)$. In other words,
the generated Krylov subspace is not efficient in capturing the solution information
and a much smaller space could have been generated to obtain an approximate solution of comparable accuracy.

\setlength\arrayrulewidth{1pt}
 \begin{table}[htb] \label{proj}
\begin{tabular}{l}
 % \vspace{0.2cm} 
  \textbf{Algorithm 1:} Galerkin projection method for the Lyapunov matrix equation  \\
 %   \hline
 %   \\
    \textbf{Input}: $A\in\mathbb{R}^{n\times n},$ $A$ symmetric and negative definite, $C\in\mathbb{R}^{n\times s}$\\
    \textbf{Output}: $Z_m\in\mathbb{R}^{n\times t}$, $t\leq sm$\\
    \\
  %  \hline
  %  \\
  \textbf{1.} Set $\beta=\|C\|_F$\\
  \textbf{2.} Perform economy-size QR of $C$, $C=V_1 {\pmb \gamma}$. Set $\mathcal{V}_1\equiv V_1$ \\ %, $V_1^TB=\pmb{\gamma}$  \\
  \textbf{3. For} $m=2, 3,\dots,$ till convergence, \textbf{Do}\\
  \textbf{4.} $\qquad$ Compute next basis block $\mathcal{V}_{m}$ and set $V_m=[V_{m-1},\mathcal{V}_m]$ \\
  \textbf{5.} $\qquad$ Update $T_m=V_m^TAV_m$ \\
  \textbf{6.} $\qquad$ \textbf{Convergence check:}\\
  \textbf{6.1} $\qquad$ $\qquad$ Solve $T_mY_m+Y_mT_m+E_1\pmb{\gamma\gamma}^TE_1^T=0$, $\quad E_1\in\mathbb{R}^{ms\times s}$\\
  \textbf{6.2} $\qquad$ $\qquad$ Compute $\|R_m\|_F=\sqrt{2}\|Y_mE_m\pmb{\tau}^T_{m+1,m}\|_F$\\
  \textbf{6.3} $\qquad$ $\qquad$ If $\|R_m\|_F/\beta^2$ is small enough \textbf{Stop} \\ %, otherwise \textbf{Continue}\\
  \textbf{7. EndDo} \\
  \textbf{8.} Compute the eigendecomposition of $Y_m$ and retain $\widehat Y\in\mathbb{R}^{sm\times t}$, $t\leq sm$\\
  \textbf{9.} Set $Z_m=V_m\widehat Y$\\
 % \\
 % \hline
 \end{tabular}  
 \end{table}

Algorithm 1 describes the generic Galerkin
procedure to determine $V_m,Y_m$ and $Z_m$ as $m$ grows, see, e.g., \cite{Simoncini2014}.
Methods thus differ for the choice of the approximation space.
If the block Krylov space $\mathbf{K}^\square_m(A,C)$ is chosen, the block Lanczos
method can be employed in line $4$ of Algorithm 1. 
In exact arithmetic,
\begin{equation}\label{Lanczos}
 \mathcal{V}_{m}\pmb{\tau}_{m+1,m}=A\mathcal{V}_{m-1}-\mathcal{V}_{m-1}\pmb{\tau}_{m,m}-\mathcal{V}_{m-2}\pmb{\tau}_{m-1,m}.
\end{equation}
Algorithm 2 describes this process at iteration $m$, with
$W = A\mathcal{V}_{m-1}$, where the orthogonalization
coefficients $\pmb{\tau}$'s are computed by the modified block Gram-Schmidt procedure (MGS), see, e.g., \cite{Saad2003};
to ensure local orthogonality in finite precision arithmetic, MGS is performed twice (beside each command is the
leading computational cost of the operation).
To simplify the presentation, we assume throughout that the generated basis is full rank. Deflation
could be implemented as it is customary in block methods whenever rank deficiency is detected.

%Since the new basis vector $\mathcal{V}_{m}$ is computed by orthonormalizing $A\mathcal{V}_{m-1}$ with respect to
%$\mathcal{V}_{m-1}$ and $\mathcal{V}_{m-2}$ only, a loss of orthogonality is observed as $m$ grows.
%{\color{blue} However, it is proved in  [cit from Kressner] that this issue just slightly affects bounds on $\|X-X_m\|_2$.
%}
\setlength\arrayrulewidth{1pt}
 \begin{table}[!ht] 
%{\color{blue}
\begin{tabular}{l}
 % \vspace{0.2cm} 
  \textbf{Algorithm 2:} One step of block Lanczos with block MGS  \\
 %   \hline
 %   \\
    \textbf{Input}: $m$, $W$, $\mathcal{V}_{m-2},\mathcal{V}_{m-1}\in\mathbb{R}^{n\times s}$\\
    \textbf{Output}: $\mathcal{V}_m\in\mathbb{R}^{n\times s}$, $\pmb{\tau}_{m-1,m},\pmb{\tau}_{m,m},
    \pmb{\tau}_{m+1,m}\in\mathbb{R}^{s\times s}$\\
    \\
  %  \hline
  %  \\
  %\textbf{1.} Compute $W=A\mathcal{V}_{m-1}$ \\%\hspace{0.5cm} $\leftarrow$ cost depends on $\mathtt{spy}(A)$ \\
  \textbf{1.} Set $\pmb{\tau}_{m-1,m}=\pmb{\tau}_{m,m}=\mathbf{0}$\\
  \textbf{2. For} $l=1,2,$ \textbf{Do}\\
  \textbf{3.} $\qquad$\textbf{For} $i=m-1,m,$ \textbf{Do}\\
  \textbf{3.} $\qquad$ $\qquad$ Compute $\pmb{\alpha}=\mathcal{V}_{i-1}^TW$ \hspace{0.5cm} $\leftarrow$ $(2n-1)s^2$ flops\\
  \textbf{5.} $\qquad$ $\qquad$ Set $\pmb{\tau}_{i,m}=\pmb{\tau}_{i,m}+\pmb{\alpha}$ \hspace{0.5cm} $\leftarrow$ $s^2$ flops\\
  \textbf{6.} $\qquad$ $\qquad$ Compute $W=W-\mathcal{V}_{i-1}\pmb{\alpha}$ \hspace{0.5cm} $\leftarrow$ $2s^2n$ flops\\
  \textbf{7.} $\qquad$ \textbf{EndDo}\\
  \textbf{8.   EndDo} \\
  \textbf{9.} Perform economy-size QR of $W$, $W=\mathcal{V}_m\pmb{\tau}_{m+1,m}$ \hspace{0.1cm} $\leftarrow$ 
    $3ns^2$ flops\\
    % \\
 % \hline
 \end{tabular}  
%}
 \end{table}

We emphasize that only the last $3s$ terms of the basis must be stored,
and the computational cost of Algorithm 2 is fixed with respect to $m$.
 In particular, at each iteration $m$,
 Algorithm 2 costs $\mathcal{O}\left((19n+s)s^2\right)$ flops.
 %plus the multiplication in line 1, whose cost 
 % depends on the sparsity pattern of $A$.

As the approximation space expands, the principal costs of Algorithm 1
are steps 4 and 6.1.  In particular, the computation of the whole matrix
$Y_m$ requires full matrix-matrix operations and a Schur decomposition of the 
coefficient matrix $T_m$, whose costs are $\mathcal{O}\left((sm)^3\right)$ flops.
Clearly, step 6.1 becomes comparable with step 4 in cost for $sm \gg 1$,
for instance if convergence is slow, so that $m\gg 1$.

Step 9 of Algorithm 1 shows that at convergence, the whole basis must be saved to return the factor $Z_m$.
This represents a major shortcoming when convergence is slow, since $V_m$ may require 
large memory allocations.

%%%%%%%%%%%%%%%%%%%%%%%%%%%%%%%%%%%%%%%%%%%%%%%%%%%%%%%%%%%%%%%%%%%%%%%%%%%%%%%%%%%%%%%%%%%%%%%%%%%%%%%%%%%%%%%%%%%%%%

\section{Standard Krylov subspace}\label{Standard Krylov subspace}
For the block space $\mathbf{K}_m^\square(A,C)$,
we devise a new residual norm expression and discuss the two-pass strategy.

%%%%%%%%%%%%%%%%%%%%%%%%%%%%%%%%%%%%%%%%%%%%%%%%%%%%%%%%%%%%%%%%%%%%%%%%%%%%%%%%%%%%%%%%%%%%%%%%%%%%%%%%%%%%%%

\subsection{Computing the residual norm without the whole solution $\mathbf{Y}_m$}\label{SK_residual}
The solution of the projected problem (\ref{reducesLyap}) requires the Schur decomposition of $T_m$.
For real symmetric matrices, the Schur decomposition amounts to the eigendecomposition
$T_m=Q_m\Lambda_mQ_m^T$, $\Lambda_m=\mbox{diag}(\lambda_1,\dots,\lambda_{sm})$,
 and the symmetric block tridiagonal structure of $T_m$ can be exploited so as to use only
${\cal O}((sm)^2)$ flops; see section~\ref{sec:Algorithm} for further details.
Equation (\ref{reducesLyap}) can thus be written as
\begin{equation}\label{diagonalLyap}
 \Lambda_m\widetilde Y+\widetilde Y\Lambda_m+Q_m^{T}E_1\pmb{\gamma\gamma}^TE_1^TQ_m=0, \quad
{\rm where}\quad \widetilde Y:=Q_m^{T}Y_mQ_m. 
\end{equation}
Since $\Lambda_m$ is diagonal, the entries of $\widetilde Y$ can be computed
by substitution \cite[Section~4]{Simoncini2014}, so that
\begin{equation}\label{lyap2sol.}
Y_m = Q_m\widetilde YQ_m^T =
 -\,Q_m \left( \frac{ e_i^T Q_m^{T} E_1\pmb{\gamma\gamma}^TE_1^T Q_m e_j}{\lambda_i+\lambda_j} \right)_{ij} Q_m^{T},
\end{equation}
where $e_k$ denotes the $k$th vector of the canonical basis of $\mathbb{R}^{sm}$.
It turns out that only the quantities within parentheses in (\ref{lyap2sol.}) are needed for the residual norm computation, 
thus avoiding the $\mathcal{O}\left((sm)^3\right)$ cost of recovering $Y_m$.
%The computation of $Y_m$ by (\ref{lyap2sol.}) needs the matrix-matrix product by the full matrix $Q_m\in\mathbb{R}^{sm\times sm}$
%which requires $\mathcal{O}\left((sm)^3\right)\,flops$.

%As already mentioned, the residual norm can be computed as
%%
%\begin{equation}\label{reseqSK.}
% \|R_m\|_F=\|Y_mE_m\pmb{\tau}^T_{m+1,m}\|_F,
%\end{equation}
%%
%where $\pmb{\tau}_{m+1,m}=E_{m+1}^T\underline{T}_mE_{m}$. 
%Recalling the expression in (\ref{reseq1.}) for the residual norm,
%in the next proposition we show how to check convergence without explicitly computing $Y_m$ by (\ref{lyap2sol.}).

\begin{proposition}\label{propSK}
 Let $T_m=Q_m\Lambda_mQ_m^T$ denote the eigendecomposition of $T_m$. Then
 \begin{eqnarray}\label{formula}
\|R_m\|_F^2 &=&2\left(
\|e_1^TS_mD_1^{-1}W_m\|_2^2+\ldots+
\|e_{sm}^TS_mD_{sm}^{-1}W_m\|_2^2\right),
%|e_1^T q_1|^2 (q_1^T D_1^{-1} q_m)^2 +
%|e_2^T q_1|^2 (q_1^T D_2^{-1} q_m)^2 + 
%\ldots +
%|e_m^T q_1|^2 (q_1^T D_m^{-1} q_m)^2 \right ),
\end{eqnarray}
where $S_m=Q_m^TE_1\pmb{\gamma\gamma}^TE_1^TQ_m\in\mathbb{R}^{sm\times sm}$, 
$W_m=Q_m^TE_m\pmb{\tau}^T_{m+1,m}\in\mathbb{R}^{sm\times s}$ and $D_j=\lambda_jI_{sm}+\Lambda_m$ for all 
$j=1,\dots,sm$.
\end{proposition}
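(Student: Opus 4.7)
The plan is to start from the known residual identity~(\ref{reseq1.}) and then unfold $Y_m$ through the diagonalization of $T_m$ so that the Frobenius norm decomposes row by row, with each row being precisely one of the terms in the claimed sum.

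First I would write, using~(\ref{reseq1.}),
\[
\|R_m\|_F^2 = 2\,\|Y_m E_m \pmb{\tau}^T_{m+1,m}\|_F^2,
\]
substitute $Y_m = Q_m \widetilde Y Q_m^T$ from~(\ref{lyap2sol.}), and exploit the orthogonality of $Q_m$ together with the definition $W_m = Q_m^T E_m \pmb{\tau}^T_{m+1,m}$ to reduce the identity to
\[
\|R_m\|_F^2 = 2\,\|\widetilde Y\, W_m\|_F^2.
\]
This is essentially a unitary-invariance step and should go through without trouble.

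Next I would extract the $i$-th row of $\widetilde Y$ from the entrywise formula in~(\ref{lyap2sol.}). By definition of $S_m$, the $(i,j)$ entry of $\widetilde Y$ equals $-(S_m)_{ij}/(\lambda_i+\lambda_j)$. Since $D_i^{-1}$ is the diagonal matrix whose $(j,j)$ entry is $1/(\lambda_i+\lambda_j)$, right-multiplication of the $i$-th row of $S_m$ by $D_i^{-1}$ produces exactly $-\widetilde Y_{i,:}$, i.e.
\[
\widetilde Y_{i,:} = -\,e_i^T S_m D_i^{-1}, \qquad i=1,\dots,sm.
\]
Then the $i$-th row of $\widetilde Y W_m$ is $-e_i^T S_m D_i^{-1} W_m$, and the Frobenius norm decomposes as the sum of squared $2$-norms of these rows, yielding the claimed expression~(\ref{formula}).

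The argument is largely bookkeeping; the only place to be careful is the identification $\widetilde Y_{i,:} = -e_i^T S_m D_i^{-1}$, because the diagonal matrix $D_i$ depends on $i$ and acts on the column index $j$ through the sum $\lambda_i+\lambda_j$. Making this indexing correspondence explicit, and then invoking $\|Q_m M\|_F = \|M\|_F$ to strip the orthogonal factor, are the two observations that drive the proof. No further structural property of $T_m$ beyond its symmetry (hence orthogonal diagonalizability) is needed.
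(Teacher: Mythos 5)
Your proposal is correct and follows essentially the same route as the paper: both start from the residual identity (\ref{reseq1.}), substitute the entrywise representation (\ref{lyap2sol.}), strip the leading $Q_m$ by unitary invariance, and recognize that $D_i^{-1}$ implements the $1/(\lambda_i+\lambda_j)$ scaling on the $i$th row of $S_m$. The only difference is organizational — you decompose the Frobenius norm directly into rows, whereas the paper first sums over the $s$ columns of $W_m$ and then swaps the order of summation — which does not change the substance of the argument.
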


\begin{proof}
 Exploiting (\ref{reseq1.}) and the representation formula (\ref{lyap2sol.}) we have
\begin{equation}
\begin{array}{rll}\label{eq1prop} 
\|R_m\|_F^2 &=& 2\displaystyle{\|Y_mE_m\pmb{\tau}^T_{m+1,m}\|_F^2=2
\left\| \left( \frac{ e_i^T Q_m^{T} E_1\pmb{\gamma\gamma}^TE_1^T Q_m e_j}{\lambda_i+\lambda_j} \right)_{ij} Q_m^{T}E_m\pmb{\tau}^T_{m+1,m}\right\|_F^2 
}\\
%&&\\
&=& 2\displaystyle{\sum_{k=1}^s\left\| \left( \frac{ e_i^T S_m e_j}{\lambda_i+\lambda_j} \right)_{ij} W_me_k\right\|_2^2 
.}\\
\end{array}
\end{equation}
For all $k=1,\dots,s$, we can write
{\small
 \begin{equation}
\begin{array}{rll} 
\displaystyle{\left\| \left( \frac{ e_i^T S_m e_j}{\lambda_i+\lambda_j} \right)_{ij} W_me_k\right\|_2^2 
}&=&
\displaystyle{\left(\sum_{j=1}^{sm} \frac{ e_1^T S_m e_j}{\lambda_1+\lambda_j} e_j^T W_me_k\right)^2 +
 \ldots +
\left(\sum_{j=1}^{sm} \frac{ e_{sm}^T S_m e_j}{\lambda_{sm}+\lambda_j} e_j^T W_m e_k\right)^2 }\\ 
&&\\
&=&\displaystyle{ \left(e_1^TS_mD_1^{-1}W_me_k\right)^2+\ldots+\left(e_{sm}^TS_mD_{sm}^{-1}W_me_k\right)^2.}\\
\end{array}
\label{eq2prop}
 \end{equation}}
Plugging (\ref{eq2prop}) into (\ref{eq1prop}) we have
$$\begin{array}{rll}
\|R_m\|_F^2&=&\displaystyle{2\sum_{k=1}^s\sum_{i=1}^{sm}\left(e_i^TS_mD_i^{-1}W_me_k\right)^2=
2\sum_{i=1}^{sm}\sum_{k=1}^s\left(e_i^TS_mD_i^{-1}W_me_k\right)^2}\\
&&\\
&=&\displaystyle{2\sum_{i=1}^{sm}\left\|e_i^TS_mD_i^{-1}W_m\right\|_2^2.}\\   
  \end{array}
$$
 \end{proof}

%Proposition \ref{propSK} shows that only the eigenvalues and the first and last $s$ entries of the eigenvectors of $T_m$ are 
%needed to compute the residual norm. This fact might be exploited by the eigensolver avoiding the computation of the whole
%$Q_m$.

\setlength\arrayrulewidth{1pt}
 \begin{table}[!ht] \label{algSK}
\begin{tabular}{l}
 % \vspace{0.2cm} 

  \textbf{Algorithm 3}: {\tt cTri}  \\
    %\hline
    %\\
    \textbf{Input}: $T_m\in\mathbb{R}^{\ell m\times \ell m},\,\pmb{\gamma},\,\pmb{\tau}_{m+1,m}\in\mathbb{R}^{\ell\times \ell}$ 
($\ell$ is the block size)\\
    \textbf{Output}: $res\;(=\|R\|_F)$\\
    \\
  %  \hline
   % \\
  \textbf{1.} Tridiagonalize $P_m^TT_mP_m=F_m$\\
   \textbf{2.} Compute $F_m=G_m\Lambda_mG_m^T$\\
   \textbf{3.} Compute $E_1^TQ_m=\left(E_1^TP_m\right)G_m$, $E_m^TQ_m=\left(E_m^TP_m\right)G_m$ \\
   \textbf{4.} Compute $S_m=\left(Q_m^TE_1\pmb{\gamma}\right)\left(\pmb{\gamma}^TE_1^TQ_m\right)$ \hspace{0.5cm}
  $\leftarrow$ $(2\ell-1)\ell^2m+(2\ell-1)\ell^2m^2\,flops$ \\
 \textbf{5.} Compute $W_m=\left(Q_m^TE_m\right)\pmb{\tau}^T_{m+1,m}$\hspace{0.5cm} $\leftarrow$ $(2\ell-1)\ell^2m\,flops$\\
 \textbf{6.} Set $res=0$\\
  \textbf{7. For} $i=1,\dots,\ell m$, \textbf{Do}\\
  %\\
  %5. $\qquad$ $\displaystyle{r_j=\sum_{i=1}^m\left(e_i^TS_mS_m^T D_1^{-1} q^{(j)}_m\right)^2}$\\
  \textbf{8.} $\qquad$ Set $D_i=\lambda_iI_{\ell m}+\Lambda_m$\\
  %\\
  \textbf{9.} $\qquad \;res=res+\left\|\left(e_i^TS_m\right)D_i^{-1}W_m\right\|_2^2$\hspace{0.5cm}
  $\leftarrow$ $2\ell^2m+\ell m+\ell \,flops$\\
  \textbf{10. EndDo} \\
  \textbf{11.} Set $res=\sqrt{2}\sqrt{res}$\\
    \\
 % \hline
 \end{tabular}
 \end{table}

%%%%%%%%%%%%%%%%%%%%%%%%%%%%%%%%%
\subsection{The algorithm for the residual norm computation}\label{sec:Algorithm}
Algorithm~3 summarizes the procedure that takes advantage of Proposition~\ref{propSK}. 
%The algorithm shows that 
 Computing the residual norm by (\ref{eq1prop}) has a leading cost of
 $4s^3m^2$ flops for standard Krylov (with $\ell=s$). This should be
compared with the original procedure in
 steps $6.1$ and $6.2$ of Algorithm~1, whose cost is $\mathcal{O}\left(s^3m^3\right)$ flops, with a large constant.
 Proposition \ref{propSK} also shows that only the first and last $\ell$ components of the eigenvectors of $T_m$ are necessary
 in the residual norm evaluation and
  the computation of the complete eigendecomposition $T_m=Q_m\Lambda_mQ_m^T$ may be avoided. To this end, the matrix $T_m$ can be
 tridiagonalized, $P_m^TT_m  P_m=F_m$, explicitly computing only the first and 
last $\ell$ rows of the transformation matrix $P_m$,
 namely $E_1^TP_m$ and $E_m^TP_m$. The eigendecomposition $F_m=G_m\Lambda_mG_m^T$ is computed exploiting the tridiagonal
 structure of $F_m$. The matrices $E_1^TQ_m$ and $E_m^TQ_m$ needed in (\ref{formula}) are then computed as
 $E_1^TQ_m=\left(E_1^TP_m\right)G_m$, $E_m^TQ_m=\left(E_m^TP_m\right)G_m$. 
%can be replaced by Algorithm 3 with a remarkable savings in
%the overall computational cost.
%We also remark that the eigendecomposition of $T_m$
%can exploit the block tridiagonal structure of $T_m$ and use only $\mathcal{O}\left((sm)^2\right)$ flops for determining
%all eigenvalues and eigenvectors.

Once the stopping criterion in step 6.3 of Algorithm 1 is satisfied, the factor $Z_m$ can be finally computed.
Once again, this can be performed without explicitly computing $Y_m$, which requires
the expensive computation $Y_m=Q_m \widetilde Y Q_m^T$. Indeed, the truncation strategy discussed around (\ref{eqn:Y})
can be applied to $\widetilde Y$ by computing the matrix 
$\widecheck Y \in\mathbb{R}^{sm\times t}$, $t\leq sm$ so that $\widetilde Y\approx \widecheck Y \widecheck Y^T$.
This factorization further reduces the overall computational cost,
since only $(2ms-1)tms$ flops are required to compute $Q_m\widecheck Y$, with no loss of information
at the prescribed accuracy. The solution factor $Z_m$ is then computed as $Z_m=V_m\left(Q_m\widecheck Y\right)$.

%%%%%%%%%%%%%%%%%%%%%%%%%%%%%%%%%%%%%%%%%%%%%%%%%%%%%%%%%%%%%%%%%%%%%%%%%%%%%%%%%%%%%%%%%%%%%%%%%%%%%%%%%%%%%%%%%%%%%
%\subsection{Implementation details}\label{Implementation details}
To make fair comparisons with state-of-the-art algorithms that employ LAPACK and SLICOT subroutines
(see Section \ref{Numerical Experiments} for more details), we used a C-compiled mex-code {\tt cTri} to implement Algorithm~3,
making use of LAPACK and BLAS subroutines.
In particular, the eigendecomposition $T_m=Q_m\Lambda_mQ_m^T$ is performed as follows.
The block tridiagonal matrix $T_m$ is tridiagonalized, $P_m^TT_mP_m=F_m$, by the LAPACK subroutine {\tt dsbtrd} that exploits
its banded structure. The transformation matrix $P_m$ is represented as a product of elementary reflectors and only its
first and last $\ell$ rows, $E_1^TP_m$, $E_m^TP_m$, are actually computed.
The LAPACK subroutine {\tt dstevr} is employed to compute the eigendecomposition of the tridiagonal matrix $F_m$.
This routine applies Dhillon's MRRR method \cite{Dhillon1997a} whose main advantage is 
the computation of numerically orthogonal eigenvectors without an explicit orthogonalization procedure.
This feature limits to $\mathcal{O}((\ell m)^2)$ flops the computation of $F_m=G_m\Lambda_m G_m^T \in\RR^{\ell m\times\ell m}$;
 see \cite{Dhillon1997a} for more details. 
 Since the residual norm computation (\ref{formula}) requires the first and last $\ell$ rows
of the eigenvectors matrix $Q_m$, we compute only those components, that is
$E_1^TQ_m=\left(E_1^TP_m\right)G_m$ and $E_m^TQ_m=\left(E_m^TP_m\right)G_m$, 
avoiding the expensive matrix-matrix product $Q_m=P_mG_m$.

%%%%%%%%%%%%%%%%%%%%%%%%%%%%%%%%%%%%%%%%%%%%%%%%%%%%%%%%%%%%%%%%%%%%%%%%%%%%%%%%%%%%%%%%%%%%%%%%%%%%%%%%%%%%%%%%%%%%%%%%%

\subsection{A ``two-pass'' strategy}\label{SK_twopass} 
While the block Lanczos method requires the storage of only $3s$ basis vectors, 
the whole $V_m=[\mathcal{V}_1,\dots,\mathcal{V}_m]\in\mathbb{R}^{n\times sm}$ is needed to compute
the low-rank factor $Z_m$ at convergence (step 9 of Algorithm 1). 
%For large $s$, this becomes very memory demanding after few iterations.
Since
\begin{equation}\label{twopasseq}
Z_m=V_m(Q_m \widecheck Y)=\sum_{i=1}^m\mathcal{V}_iE_i^T (Q_m \widecheck Y),
\end{equation}
we suggest not to store $V_m$ during the iterative process but to perform, at convergence, a second Lanczos
pass computing and adding the rank-$s$ term  in (\ref{twopasseq}) at the $i$th step,
in an incremental fashion.
We point out that the orthonormalization coefficients %required to compute the basis vectors  
are already available in the matrix $T_m$, therefore $\mathcal{V}_i$ is simply computed
by repeating the three-term recurrence (\ref{Lanczos}),
%, that is
%%
%$$ \mathcal{V}_{i}=\left(A\mathcal{V}_{i-1}-\mathcal{V}_{i-2}\pmb{\tau}_{i-1,i}-
%\mathcal{V}_{i-1}\pmb{\tau}_{i,i}\right)\pmb{\tau}_{i+1,i}^{-1},
%$$
which costs $\mathcal{O}\left((4n+1)s^2\right)$ flops plus the multiplication by $A$, making the second Lanczos pass cheaper than
the first one.

%%%%%%%%%%%%%%%%%%%%%%%%%%%%%%%%%%%%%%%%%%%%%%%%%%%%%%%%%%%%%%%%%%%%%%%%%%%%%%%%%%%%%%%%%%%%%%%%%%%%%%%%%%%%%%%%%%%

\section{Extended Krylov subspace}\label{Extended}
%In 2007, the introduction of a solution process based on the Extended Krylov subspace $\mathbf{EK}_m(A,B)$ gave a new life to
%projection methods in the large-scale matrix equations setting.
Rational Krylov subspaces have shown to provide dramatic performance improvements over classical
polynomial Krylov subspaces, because they build spectral information earlier, thus generating a much smaller
space dimension to reach the desired accuracy. The price to pay is that each iteration is more computationally
involved, as it requires solves with the coefficient matrices. The overall CPU time performance thus depends
on the data sparsity of the given problem; we refer the reader to \cite{Simoncini2014} for a thorough discussion.

In this section we show that the enhanced procedure for the residual norm computation can be applied
to a particular rational Krylov based strategy,
the {\it Extended} Krylov subspace method, since also this algorithm relies on a block tridiagonal reduced
matrix when data is symmetric.
Different strategies for building the basis $V_m=[\mathcal{V}_1,\ldots,\mathcal{V}_m]\in\mathbb{R}^{n\times2sm}$ of the 
extended Krylov subspace $\mathbf{EK}_m^\square(A,C)$ can be found in the literature, see, e.g.,
\cite{Jagels2009},\cite{Mertens2015},\cite{Simoncini2007}. An intuitive key fact is that the subspace
expands in the directions of $A$ and $A^{-1}$ at the same time.
In the block case, a natural implementation thus generates two new blocks of vectors at the time, one in
each of the two directions. Starting with $[V_1, A^{-1}V_1]$, the next iterations generate 
the blocks $\mathcal{V}_m^{(1)}, \mathcal{V}_m^{(2)}\in\RR^{n\times s}$ by multiplication by $A$ and solve with $A$, respectively,
and then setting $\mathcal{V}_m = [\mathcal{V}_m^{(1)}, \mathcal{V}_m^{(2)}]\in\mathbb{R}^{n\times 2s}$.
As a consequence, the block Lanczos procedure
described in Algorithm 2 can be employed with $W = [ A\mathcal{V}^{(1)}_{m-1}, A^{-1}\mathcal{V}^{(2)}_{m-1}]$ (with $2s$ columns).
%{\color{blue} Although the LU factors of $A$ can be computed once for all at the beginning of the iterative process as suggested
%in \cite{Simoncini2007}, the linear solves dramatically increase the computational cost of the basis construction if compared
%to the standard case $\mathbf{K}_m^\square(A,C)$ where only matrix-vector products are required.
%See Section \ref{Numerical Experiments}.}
The orthogonalization process determines the coefficients of the symmetric block tridiagonal matrix $H_m$ with blocks of size $2s$,
$$ H_m=\left(\begin{array}{lll}       
        \pmb{\vartheta}_{11} &\quad \pmb{\vartheta}_{12} &   \\
        \pmb{\vartheta}_{21} & \quad \pmb{\vartheta}_{22} & \quad \pmb{\vartheta}_{23}   \\
         &  \ddots &   \ddots \quad \quad  \ddots   \\
        &   \quad \ddots  &\quad \ddots \qquad \pmb{\vartheta}_{m-1,m} \\
    & &  \pmb{\vartheta}_{m,m-1}\quad  \pmb{\vartheta}_{m,m}  \\
       \end{array}\right)\in\mathbb{R}^{2sm\times 2sm},
$$
such that
$\mathcal{V}_{m}\pmb{\vartheta}_{m+1,m}=[A\mathcal{V}_{m-1}^{(1)},A^{-1}\mathcal{V}_{m-1}^{(2)}]-\mathcal{V}_{m-1}\pmb{\vartheta}_{m,m}
-\mathcal{V}_{m-2}\pmb{\vartheta}_{m-1,m}$.
The coefficients $\pmb\vartheta$'s correspond to the $\pmb\tau$'s in Algorithm 2, however as opposed
to the standard Lanczos procedure, $H_m \ne T_m=V_m^T A V_m$. Nonetheless, a recurrence
can be derived to compute the columns of $T_m$ from those of $H_m$ during the
iterations; see \cite[Proposition~3.2]{Simoncini2007}. The computed $T_m$ is block tridiagonal,
with blocks of size $2s$, and
this structure allows us to use the same approach followed for the block standard Krylov method
as relation (\ref{reseq1.}) still holds. Algorithm 3 can thus be adopted to compute the residual norm also 
in the extended Krylov approach with $\ell=2s$.
%The only difference may be in the computation of the matrix-matrix product 
%$W_m=Q_m^TE_m\pmb{{\tau}}_{m+1,m}^T\in\mathbb{R}^{2ms\times 2s}$. Indeed, 
Moreover, it is shown in \cite{Simoncini2007} that
the off-diagonal blocks of $T_m$ %, namely $\pmb{\tau}_{i,j}\in\mathbb{R}^{2s\times 2s}$, $j=1,\ldots,m$, $i=j\pm 1$, 
have a zero lower $s\times 2s$ block, that is
$$
{\pmb\tau}_{i,i-1}=
\begin{bmatrix}
{\overline{\pmb\tau}}_{i,i-1}\\
0
\end{bmatrix},\quad 
\overline{\pmb\tau}_{i,i-1}\in\mathbb{R}^{s\times 2s},\; i=1,\ldots,m .
$$
This observation can be exploited in the computation of the residual norm as
$$ \|R_m\|_F=\sqrt{2}\|Y_mE_m\pmb{\tau}^T_{m+1,m}\|_F=\sqrt{2}\|Y_mE_m\pmb{\overline{\tau}}^T_{m+1,m}\|_F,$$
and $\pmb{\overline{\tau}}_{m+1,m}$ can be passed as an input argument to {\tt cTri} instead of the whole 
$\pmb{\tau}_{m+1,m}$.

The extended Krylov subspace dimension grows faster than the standard one as it is augmented 
by $2s$ vectors per iteration. In general, this does not create severe storage difficulties as the extended Krylov approach 
exhibits faster convergence than standard Krylov in terms of number of iterations.
However, for hard problems the space may still become too large to be stored, especially for large $s$.
In this case, a two-pass-like strategy may be appealing. To avoid the occurrence of $sm$ new system solves
with $A$, however,
it may be wise to still store the second blocks, $\mathcal{V}_i^{(2)}$, $i=1, \ldots, m$, and only save
half memory allocations, those corresponding to the matrices $\mathcal{V}_i^{(1)}$, $i=1, \ldots, m$.

Finally, we remark that if we were to use more general rational Krylov subspaces, which
use rational functions other than $A$ and $A^{-1}$ to generate the space \cite{Simoncini2014}, the
projected matrix $T_m$ would lose the convenient block tridiagonal structure, so that the
new strategy would not be applicable.
%Unfortunately, in the extended procedure, the execution time is highly influenced by the cost of solving with $A$ 
%so that a second basis construction may not pay off.
%A simple remedy might be the following. During the iterative process all the basis blocks $\mathcal{V}_i^{(2)}$,
%$i=1,\ldots,m$ are stored.
%At convergence, the missing basis blocks $\mathcal{V}_i^{(1)}$ are computed in a second Lanczos pass by Algorithm 2 and
%the matrix $Z_m$ is computed on-the-fly.
%This strategy does not increase the number of the costly linear system solves since in the second pass only the 
%multiplication by $A$ is requested.
%Moreover, the memory requirement is halved as only $ms+3s$ basis vectors are stored.
%to store only the $sm$ basis vectors in the $A^{-1}$ direction and other $3s$ vectors
%in the $A$ one during the iterative process.
%In a second Lanczos pass, the missing basis vectors can be computed exploiting the orthogonalization coefficients 
%collected in $T_m$.
%This procedure halves the memory requirement of the whole process with no increment in the number of the costly linear
%system solves.  
%the moderate number of iterations achieved does not necessarily imply that a low
%memory requirement is needed. In fact, 
%This is the reason why the Extended Krylov subspace method is applied to problems 
%with ``very'' low rank right-hand, that is $s\leq 10$.

%%%%%%%%%%%%%%%%%%%%%%%%%%%%%%%%%%%%%%%%%%%%%%%%%%%%%%%%%%%%%%%%%%%%%%%%%%%%%%%%%%%%%%%%%%%%%%%%%%%%%%%%%%%%%%
\section{The case of the Sylvester equation}\label{Sylvester_intro}
The strategy presented for the symmetric Lyapunov equation (\ref{Lyapeq.}) can be extended to
the Sylvester equation
\begin{equation}\label{Sylveq.}
 AX+XB+C_1C_2^T=0,\quad A\in\mathbb{R}^{n_1\times n_1},B\in\mathbb{R}^{n_2\times n_2},C_1\in\mathbb{R}^{n_1\times s},
 C_2\in\mathbb{R}^{n_2\times s}, 
\end{equation}
where the coefficient matrices $A,B$ are both symmetric and negative definite while 
$C_1,C_2$ are tall, that is $s\ll n_1,n_2$. 

%%%%%%%%%%%%%%%%%%%%%%%%%%%%%%%%%%%%%%%%%%%%%%%%%%%%%%%%%%%%%%%%%%%%%%%%%%%%%%%%%%%%%%%%%%%%%%%%%%%%%%%%%%%%%%%%%%%

\subsection{Large $\mathbf{A}$ and large $\mathbf{B}$}\label{The Sylvester case}
We consider the case when both $A$ and $B$ are large and sparse matrices.
If their eigenvalue distributions satisfy certain hypotheses, 
the singular values of the 
nonsymmetric solution $X\in\mathbb{R}^{n_1\times n_2}$ to (\ref{Sylveq.}) exhibit a fast decay, and a low-rank approximation
$\widetilde X=Z_1Z_2^T$ to $X$ can be sought; see, e.g., \cite[Th. 2.1.1]{Sabino2006}, \cite[Section 4.4]{Simoncini2014}.

Projection methods seek an approximate solution $X_m\in\mathbb{R}^{n_1\times n_2}$ to (\ref{Sylveq.})
of the form $X_m=V_mY_mU_m^T$ where the orthonormal columns of $V_m$ and $U_m$ span suitable subspaces $\mathcal{K}_m$
and $\mathcal{C}_m$ respectively\footnote{The space dimensions of ${\cal K}_m$ and ${\cal C}_m$ are not
necessarily equal, we limit our discussion to the same dimension for simplicity
of exposition.}. The construction of two approximation spaces is thus requested and,
for the sake of simplicity, we limit our discussion to the 
standard Krylov method, that is $\mathcal{K}_m=\mathbf{K}^\square_m(A,C_1)$ and $\mathcal{C}_m=\mathbf{K}^\square_m(B,C_2)$,
with obvious generalization to the extended Krylov subspace.
As in the Lyapunov case, $Y_m$ is computed by imposing
a Galerkin condition on the residual matrix $R_m:=AX_m+X_mB+C_1C_2^T$, that is
\begin{equation}\label{resSylv2}
 V_m^TR_mU_m=0.
\end{equation}
We assume $C_1=V_1\pmb{\gamma}_1$, $C_2=U_1\pmb{\gamma}_2$ for some nonsingular 
$\pmb{\gamma}_1,\pmb{\gamma}_2\in\mathbb{R}^{s\times s}$, and a similar discussion 
to the one presented in Section \ref{Projection methods} shows that condition (\ref{resSylv2})
is equivalent to solving the reduced Sylvester problem
\begin{equation}\label{reducedSylv}
 T_mY_m+Y_mJ_m+E_1\pmb{\gamma}_1\pmb{\gamma}_2^TE_1^T=0,
\end{equation}
where $T_m:=V_m^TAV_m$, $J_m:=U_m^TBU_m = ({\pmb\iota}_{ij})$. Similarly to the Lyapunov case, computing the
eigendecompositions $T_m=Q_m\Lambda_mQ_m^T$, $\Lambda_m=\mbox{diag}(\lambda_1,\dots,\lambda_{sm})$, and 
$J_m=P_m\Upsilon_m P_m^T$, $\Upsilon_m=\mbox{diag}(\upsilon_1,\dots,\upsilon_{sm})$, 
the solution $Y_m$ to (\ref{reducedSylv}) can be written as
\begin{equation}\label{Sylvformula}
 Y_m=Q_m\widetilde YP_m^T=-\,Q_m 
\left( \frac{ e_i^T Q_m^{T} E_1\pmb{\gamma}_1\pmb{\gamma}_2^TE_1^T P_m e_j}{\upsilon_i+\lambda_j} \right)_{ij} P_m^{T}.
\end{equation}
%%
%The matrix $Y_m$ is computed at each iteration as 
The last $s$ rows and columns of $Y_m$ are employed in the residual norm calculation.
Indeed, letting $\underline{T}_m=V_{m+1}^TAV_m$ and $\underline{J}_m=U_{m+1}^TBU_m$, it can be shown that
\begin{eqnarray} \label{resSylv}
  \|R_m\|_F^2&=&\|\underline{T}_mY_m\|_F^2+\|Y_m\underline{J}_m^T\|_F^2 =
  \|\pmb{\tau}_{m+1,m} E_m^TY_m\|_F^2+ \|Y_mE_m\pmb{\iota}_{m+1,m}^T\|_F^2 ,
 \end{eqnarray}
where $\pmb{\tau}_{m+1,m}=E_{m+1}^T\underline{T}_mE_m\in\mathbb{R}^{s\times s}$ and
$\pmb{\iota}_{m+1,m}=E_{m+1}^T\underline{J}_mE_m\in\mathbb{R}^{s\times s}$, 
see, e.g., \cite{Simoncini2014},\cite{Breiten2014}.

The same arguments of Section \ref{SK_residual} can be applied to the factors in (\ref{resSylv}) 
leading to Algorithm 4 for the computation of the residual norm without explicitly assembling the matrix $Y_m$.
The eigendecompositions in step 1 are not fully computed. In particular, only the spectrum and the first and last $\ell$ 
components of the eigenvectors of $T_m$ and $J_m$ are explicitly computed following the strategy 
presented in Section \ref{sec:Algorithm}.
\setlength\arrayrulewidth{1pt}
 \begin{table}[!ht] \label{algSKSylv}
\begin{tabular}{l}
  %\vspace{0.2cm} 
    \textbf{Algorithm 4}: Computing the residual norm for $A$ and $B$ large  \\
  %  \hline
  %  \\
    \textbf{Input}: $T_m,\,J_m\in\mathbb{R}^{\ell m\times \ell m},\,\pmb{\gamma}_1,\pmb{\gamma}_2,\,
    \pmb{\tau}_{m+1,m},\,\pmb{\iota}_{m+1,m}\in\mathbb{R}^{\ell \times \ell}$\\
    \textbf{Output}: $res\;(=\|R_m\|_F)$\\
   % \\
    %\hline
    \\
  \textbf{1.} Compute $T_m=Q_m\Lambda_mQ_m^T$ and $J_m=P_m\Upsilon_m P_m^T$\\
  \textbf{2.} Compute $S_m:=\left(Q_m^{T} E_1\pmb{\gamma}_1\right)\left(\pmb{\gamma}_2^TE_1^T P_m\right)$\\
  \textbf{3.} Compute $F_m:=\left(Q_m^{T} E_m\right)\pmb{\tau}_{m+1,m}^T$, 
  $G_m:=\left(P_m^{T} E_m\right)\pmb{\iota}_{m+1,m}^T$\\
  \textbf{4.} Set $res=0$\\
  \textbf{5. For} $i=1,\dots,\ell m$, \textbf{Do}\\
  %\\
  %5. $\qquad$ $\displaystyle{r_j=\sum_{i=1}^m\left(e_i^TS_mS_m^T D_1^{-1} q^{(j)}_m\right)^2}$\\
  %\\
  \textbf{6.} $\qquad$ Set $D_i':=\upsilon_iI_{\ell m}+\Lambda_m$ and  $D_i'':=\lambda_iI_{\ell m}+\Upsilon_m$\\
  \textbf{7.} $\qquad \;res=res+\left\|e_i^TS_mD_i'^{-1}G_m\right\|_2^2+\left\|e_i^TS_m^TD_i''^{-1}F_m\right\|_2^2$\\
  \textbf{8. EndDo} \\
  \textbf{11.} Set $res=\sqrt{res}$\\
 % \\
 % \hline
 \end{tabular}  
 \end{table}

 At convergence, the matrix $Y_m$ can be computed by (\ref{Sylvformula}). 
 Also in the Sylvester problem the matrix $Y_m$ may be numerically singular. 
%, and a low-rank approximation strategy could be performed. 
In this case, factors $\widehat Y_{1},\widehat Y_{2}\in\mathbb{R}^{sm\times t}$, $t\leq sm$, such that 
 $\|\widetilde Y -\widehat Y_{1}\widehat Y_{2}^T\|_F\leq\epsilon$ can be computed via the
 truncated singular value decomposition of the nonsymmetric matrix 
 $\widetilde Y$.
 % truncated singular values decomposition of the nonsymmetric matrix 
 %$\widetilde Y$ can be computed, $\widetilde Y\approx \widehat Y_{L}\widehat Y_{R}^T$, 
 %$\widehat Y_{L},\widehat Y_{R}\in\mathbb{R}^{sm\times t}$, $t\leq sm$, such that 
 %$\|\widetilde Y -\widehat Y_{L}\widehat Y_{R}^T\|_F\leq\epsilon$. 
The low-rank factors $Z_1,Z_2$ of $X_m$, $X_m\approx Z_1Z_2^T$, are then computed as $Z_1=V_m\left(Q_m\widehat Y_1\right)$ and
$Z_2=U_m\left(P_m\widehat Y_2\right)$.

If equation (\ref{Sylveq.}) is solved by the standard Krylov method, the two-pass strategy presented
 in Section \ref{SK_twopass} can be easily adapted to the Sylvester case. Indeed,
 denoting by $V_m=[\mathcal{V}_1,\dots,\mathcal{V}_m]$ and $U_m=[\mathcal{U}_1,\dots,\mathcal{U}_m]$,
 the low-rank factors $Z_1$ and $Z_2$ can be written as 
$$Z_1=V_m\left(Q_m\widehat Y_{1}\right)=\sum_{i=1}^m\mathcal{V}_iE_i^T\left(Q_m\widehat Y_{1}\right),\qquad 
Z_2=U_m\left(P_m\widehat Y_{2}\right)=
\sum_{i=1}^m\mathcal{U}_iE_i^T\left(P_m\widehat Y_{2}\right).$$
As in the Lyapunov case, the factors $Z_1$, $Z_2$ can be computed in a second Lanczos pass since the terms 
$\mathcal{V}_iE_i^T\left(Q_m\widehat Y_{1}\right)$ and $\mathcal{U}_iE_i^T\left(P_m\widehat Y_{2}\right)$ do not require the
whole basis to be available.
Therefore, for the Sylvester problem (\ref{Sylveq.}), the ``two-pass'' strategy allows us to store only $6s$ basis vectors,
$3s$ vectors for each of the two bases.

%%%%%%%%%%%%%%%%%%%%%%%%%%%%%%%%%%%%%%%%%%%%%%%%%%%%%%%%%%%%%%%%%%%%%%%%%%%%%%%%%%%%%%%%%%%%%%%%%%%%%%%%%%%
\subsection{Large $\mathbf{A}$ and small $\mathbf{B}$}\label{LargeAsmallB}
In some applications, such as the solution of eigenvalues problems \cite{Watkins2007} or boundary value problems with 
separable coefficients \cite{Wachspress1963},
the matrices $A$ and $B$ in (\ref{Sylveq.}) could have very different dimensions. In particular, one of them,
for instance, $B$, could be of moderate size, that is $n_2\ll 1000$. 
In this case, the projection method presented in Section \ref{The Sylvester case}
can be simplified. Indeed, a reduction of the matrix $B$ becomes unnecessary, so that
  a numerical solution $X_m$ to (\ref{Sylveq.}) of the form $X_m=V_mY_m$ is sought,
where the columns of $V_m$ span $\mathcal{K}_m=\mathbf{K}^\square_m(A,C_1)$, as before.
%where $\mathcal{K}_m=\mbox{Range}(V_m)$ is used to project $A$.
%For the sake of simplicity, we consider $\mathcal{K}_m=\mathbf{K}_m(A,C_1)$ and $V_m$ having orthonormal columns.
%The matrix $Y_m$ is computed by imposing a Galerkin condition, different from (\ref{resSylv2}),
The Galerkin condition on the residual matrix $R_m:=AX_m+X_mB+C_1C_2^T$ thus becomes
\begin{equation}\label{orthLargeAsmallB}
V_m^TR_m=0, 
\end{equation}
see \cite[Section 4.3]{Simoncini2014} for more details. The procedure continues as in the previous
cases, taking into account that the original problem is only reduced ``from the left''.
%
%Substituting $R_m$ in (\ref{orthLargeAsmallB}) and 
Assuming $C_1=V_1\pmb{\gamma}_1$, we obtain
% for some nonsingular $\pmb{\gamma}_1\in\mathbb{R}^{s\times s}$, so that 
%$V_m^TC_1=E_1\pmb{\gamma}_1$, the orthogonality condition (\ref{orthLargeAsmallB}) is equivalent to 
%%
$$ 0=V_m^TAX_m+V_m^TX_mB+V_m^TC_1C_2^T=\left(V_m^TAV_m\right)Y_m+\left(V_m^TV_m\right)Y_mB+E_1\pmb{\gamma}_1C_2^T,$$
that is
\begin{equation}\label{reducedLargeASmallB}
 T_mY_m+Y_mB+E_1\pmb{\gamma}_1C_2^T=0.
\end{equation}
Computing the eigendecompositions $T_m=Q_m\Lambda_mQ_m^T$, $\Lambda_m=\mbox{diag}(\lambda_1,\ldots,\lambda_{sm})$
and $B=P\Upsilon P^T$, $\Upsilon=\mbox{diag}(\upsilon_1,\ldots,\upsilon_{n_2})$, the solution matrix $Y_m$ to (\ref{reducedLargeASmallB})
can be written as
\begin{equation}\label{formulaLargeAsmallB}
Y_m=Q_m\widetilde YP^T=-Q_m\left(\frac{Q_m^TE_1\pmb{\gamma}_1C_2^TP}{\lambda_i+\upsilon_j}\right)_{ij}P^T. 
\end{equation}
%%
%If the block Lanczos procedure is employed in the basis construction, 
As before, the block tridiagonal structure of $T_m$ 
can be exploited in the eigendecomposition computation $T_m=Q_m\Lambda_mQ_m^T$, while
the eigendecomposition $B=P\Upsilon P^T$ is computed
once for all at the beginning of the whole process.
% can be unstructured. Nonetheless, the computation of its eigendecomposition does not represent a shortcoming
%since $B$ is supposed to have small dimensions. Moreover, the eigendecomposition $B=P\Phi P^T$ can be performed once for 
%all at the beginning of the iterative process and the matrix-matrix product $\pmb{\gamma}_1C_2^TP$ as well.

%The matrix $Y_m$ is computed at each iteration since 
The expression of the residual norm simplifies as
% last $s$ rows of $Y_m$ are employed in the residual norm calculation as
%Indeed, it can be shown that
$\|R_m\|_F =\|Y_m^TE_m^T\pmb{\tau}_{m+1,m}^T\|_F$.
To compute this norm  without assembling the whole matrix $Y_m$,
a slight modification of Algorithm 3 can be implemented.
The resulting procedure is summarized in Algorithm~5 where only selected entries of
the eigenvector matrix $Q_m$ in step 1 are computed;
see the corresponding strategy in Section \ref{sec:Algorithm}.

\setlength\arrayrulewidth{1pt}
 \begin{table}[!ht] 
\begin{tabular}{l}
 % \vspace{0.2cm} 

  \textbf{Algorithm 5}: Computing the residual norm for $A$ large and $B$ small  \\
    %\hline
    %\\
    \textbf{Input}: $T_m\in\mathbb{R}^{\ell m\times \ell m},\,\pmb{\tau}_{m+1,m}\in\mathbb{R}^{\ell\times \ell}$,
    $P^TC_2\pmb{\gamma}_1^T\in\mathbb{R}^{n_2\times \ell}$, $\{\upsilon_i\}_{i=1,\ldots,n_2}$ \\
    \textbf{Output}: $res\;(=\|R\|_F)$\\
    \\
  %  \hline
   % \\
  \textbf{1.} Compute $T_m=Q_m\Lambda_mQ_m^T$\\
  \textbf{2.} Compute $S_m=\left(P^TC_2\pmb{\gamma}_1^T\right)\left(E_1^TQ_m\right)$\\
 \textbf{3.} Compute $W_m=\left(Q_m^TE_m\right)\pmb{\tau}^T_{m+1,m}$ \\
 \textbf{4.} Set $res=0$\\
  \textbf{5. For} $i=1,\dots,n_2$, \textbf{Do}\\
  %\\
  %5. $\qquad$ $\displaystyle{r_j=\sum_{i=1}^m\left(e_i^TS_mS_m^T D_1^{-1} q^{(j)}_m\right)^2}$\\
  \textbf{6.} $\qquad$ Set $D_i=\upsilon_iI_{\ell m}+\Lambda_m$\\
  %\\
  \textbf{7.} $\qquad \;res=res+\left\|\left(e_i^TS_m\right)D_i^{-1}W_m\right\|_2^2$\\
  \textbf{8. EndDo} \\
  \textbf{9. Set $res=\sqrt{res}$} \\
  \\
 % \hline
 \end{tabular}
 \end{table}

 %At convergence, the matrix $Y_m$ can be computed by (\ref{formulaLargeAsmallB}) and the same truncation strategy
 %discussed in the previous Section can be employed to reduce the rank of the final numerical solution. In particular,
A reduced rank approximation to the solution $Y_m$ obtained by (\ref{formulaLargeAsmallB}) is given as
 $\widetilde Y\approx \widehat Y_{1}\widehat Y_{2}^T$, so that the low rank factors $Z_1$, $Z_2$ are computed as
 $Z_1=V_m\left(Q_m\widehat Y_1\right)$ and $Z_2=P\widehat Y_2$. A two-pass strategy can again be employed to avoid
storing the whole matrix $V_m$.
%Moreover, if the Standard Krylov method is adopted, the storing of the 
% whole basis $V_m$ can be omitted and the factor $Z_1$ can be computed in a second Lanczos pass.
 
%%%%%%%%%%%%%%%%%%%%%%%%%%%%%%%%%%%%%%%%%%%%%%%%%%%%%%%%%%%%%%%%%%%%%%%%%%%%%%%%%%%%%%%%%%%%%%%%%%%%%%%%%%%%%%%%

\section{Numerical experiments}\label{Numerical Experiments}
In this section some numerical examples illustrating the enhanced  algorithm are reported.
All results were obtained with Matlab R2015a on a Dell machine with two 2GHz processors and 128 GB of RAM.

The standard implementation of projection methods (Algorithm 1) and the proposed enhancement, 
where lines $6.1$ and $6.2$ of Algorithm 1 are replaced by Algorithm 3, are compared.
For the standard implementation, different decomposition based solvers for 
line $6.1$ in Algorithm 1 are considered: The Bartels-Stewart
algorithm (function {\tt lyap}), one of its variants ({\tt lyap2})\footnote{The function 
{\tt lyap2} was slightly modified to exploit the orthogonality of the eigenvectors matrix.},
 and the Hammarling method ({\tt lyapchol}).
All these algorithms make use of SLICOT or LAPACK subroutines. 

Examples with a sample of small values of the rank $s$
of $C_1C_2^T$  are reported.
 %(as it is customary in the solution of large-scale Lyapunov and 
%Sylvester equations with low-rank $C_1C_2^T$ by projection and ADI type methods.}
%, otherwise the underlying approximation space would grow too quickly. Indeed, at each iteration, $s$ ($2s$)
%new basis vectors are added to the current space $\mathbf{K}^\square_m$ ($\mathbf{EK}^\square_m$). }
In all our experiments the convergence tolerance on the relative residual norm is $\mathtt{tol}=10^{-6}$.

%In all our experiments the stopping criterion is based on the relative residual norm. Indeed, the backward error analysis 
%suggested in \cite{Simoncini2007} can not be performed since it requires the norm of the solution $Y_m$ to (\ref{reducesLyap}).

\vskip 0.1in
\begin{example} \label{blockSK}
{\rm
In the first example, the block standard Krylov approach is tested for solving the
Lyapunov equation $AX+XA + CC^T=0$. We consider %the Lyapunov equation (\ref{Lyapeq.})
$A\in\mathbb{R}^{n\times n}$, $n=21904$ stemming from the discretization by centered finite differences
of the differential operator
$$ {\cal L}(u)=(e^{-xy}u_x)_x+(e^{xy}u_y)_y,$$
on the unit square with zero Dirichlet boundary conditions, 
while $C=\mathtt{rand}\,(n,s)$, $s=1,\,4,\,8$, that is the entries of $C$ are
random numbers uniformly distributed in the interval $(0,1)$. $C$ is then normalized, $C=C/\|C\|_F$.
%We point out that, in this example, $s=4$ and therefore, $4$ basis vectors are added to $\mathbf{K}_m(A,C)$ at each iteration.
Table \ref{ex1.1} (left) reports the CPU time (in seconds) needed for evaluating
the residual norm (time res) and for completing the whole procedure (time tot).
Convergence is checked at each iteration. 
For instance, for $s=1$, using {\tt lyapchol} as inner solver the solution process takes $38.51$ secs, $36.51$ of which are used
for solving the inner problem of step 6.1. If we instead use {\tt cTri}, the factors of $X_m$ are
determined in $7.25$ seconds, only $4.42$ of which
are devoted to evaluating the residual norm. Therefore, $87.9$\% of the  residual computation CPU time is saved,
leading to a $81.2$\% saving for the whole procedure. An explored device to mitigate the residual norm computational cost
is to check the residual only periodically.  In the right-hand side of
Table~\ref{ex1.1} we report the results in case the residual norm is computed every 10
iterations.

 Table~\ref{tab1.1} shows that the two-pass strategy of Section~\ref{SK_twopass}
drastically reduces the memory requirements of the solution process, as already observed in \cite{Kressner2008},
at a negligible percentage of the total execution time.
% since only $3s$ basis vectors need to be stored.
%The second Lanczos pass to build the final solution factor requires a negligible percentage of the total execution time 
%(see Table 5.2) and our memory-conserving strategy thus really pays off.
\setlength\arrayrulewidth{0.4pt}

\begin{table}[htb]
{\scriptsize
 %\captionsetup{justification=centering,font=scriptsize ,labelfont=bf,textfont=it}
\begin{center}
\caption{Example \ref{blockSK}. CPU times and gain percentages. Convergence is checked every $d$ iterations.
Left: $d=1$. Right: $d=10$. \label{ex1.1}}
\begin{tabular}{|r|r c r c| r c r c|}
\hline
 & time res & gain &time tot & gain & time res & gain &time tot & gain\\
 & (secs) &       & (secs)&    & (secs)&  & (secs) & \\
 \hline
 &\multicolumn{4}{|c|}{$s=1$ (444 its)} & \multicolumn{4}{ |c|}{$s=1$} \\
 \hline
 {\tt lyap} & $42.36$ & $89.5$\%&$45.18$&$83.9$\%&
 $4.78$ & $89.7$\%&$7.87$&$52.9$\%\\
%\hline
 {\tt lyapchol} & $36.51$ & $87.9$\% &$38.51$ &$81.2$\% &
 $4.27$ & $88.5$\% &$7.59$ &$51.25$\%  \\
%\hline
 {\tt lyap2}  & $34.27$&$87.1$\%& $37.07$&$80.4$\%&
 $3.85$ & $87.2$\% &$7.14$ &$48.1$\%  \\
% \hline
 {\tt cTri} &{\bf 4.42} &$\curvearrowswne$ &$\mathbf{7.25}$&$\curvearrowswne$&
 $\mathbf{0.49}$ &$\curvearrowswne$ &$\mathbf{3.70}$&$\curvearrowswne$\\
\hline 
&\multicolumn{4}{|c|}{$s=4$ (319 its)}& \multicolumn{4}{| c|}{$s=4$} \\
\hline
 {\tt lyap} & $819.02$ & $96.4$\%&$825.44$&$95.6$\% &
 $88.52$ & $96.6$\%&$95.60$&$91.65$\%  \\
%\hline
 {\tt lyapchol} & $213.87$ & $86.1$\% &$220.51$ &$83.6$\% & 
 $21.38$ & $86.1$\% &$26.83$ &$70.2$\%  \\
%\hline
 {\tt lyap2}  & $212.99$&$86.0$\%& $219.34$&$83.5$\%&  
 $20.28$&$85.3$\%& $27.65$&$71.1$\% \\
% \hline
 {\tt cTri} &{\bf 29.78} &$\curvearrowswne$ &{\bf 36.21}&$\curvearrowswne$& 
 $\mathbf{2.97}$ &$\curvearrowswne$ &$\mathbf{7.98}$&$\curvearrowswne$\\
\hline

&\multicolumn{4}{|c|}{$s=8$ (250 its)}& \multicolumn{4}{| c|}{$s=8$} \\
\hline
 {\tt lyap} &$2823.31$& $97.9$\%&$2836.29$&$97.6$\% &
 $305.11$ & $98.2$\%&$313.49$&$95.8$\%  \\
%\hline
 {\tt lyapchol} & $415.42$& $85.7$\% &$427.21$ &$84.1$\% &$38.94$ & $85.7$\%&$46.96$ &$71.8$\%  \\
%\hline
 {\tt lyap2}  & $424.23$&$86.0$\%& $435.90$&$84.4$\%&$41.39$&$86.5$\%& $49.15$&$73.1$\% \\
% \hline
{\tt cTri} &{\bf 59.25} &$\curvearrowswne$ &{\bf 67.89}&$\curvearrowswne$&$\mathbf{5.56}$&$\curvearrowswne$&
$\mathbf{13.22}$&$\curvearrowswne$\\
\hline 

\end{tabular}
\end{center}
}
\end{table}

\setlength\arrayrulewidth{0.4pt}
\begin{table}[ht!]
\begin{center}
 %\captionsetup{justification=centering,font=scriptsize ,labelfont=bf,textfont=it}
\caption{Example \ref{blockSK}. Memory requirements with and without full storage, and CPU time of the second Lanczos sweep.
 \label{tab1.1}}
\begin{tabular}{|rrr r r r|} \hline
  &  &  & memory   &   reduced  & CPU time  \\
  &  &  &  whole $V_m$  & mem. alloc. &  (secs) \\
 $n$&$s$ &$m$ & $s\cdot m$ &  $3s$& \\
\hline
21904 & 1 & 444  & 444 & 3 & 1.44 \\
 21904 & 4 & 319 & 1276 & 12 & 2.35 \\
 21904& 8& 250&2000& 24& 3.74\\
 \hline
\end{tabular}
\end{center}
\end{table}
}
\end{example}
\vskip 0.1in

\begin{example}\label{RAIL}
{\rm
 The RAIL benchmark problem
 \footnote{\scriptsize{\tt http://www.simulation.uni-freiburg.de/downloads/benchmark/Steel\%20Profiles\%20\%2838881\%29}}
 solves the generalized Lyapunov equation
\begin{equation}\label{exeq.}
 AXE+EXA+CC^T=0,
\end{equation}
where $A,E\in\mathbb{R}^{n\times n}$, $n=79841$, $C\in\mathbb{R}^{n\times s}$, $s=7$. Following the discussion in 
\cite{Simoncini2007}, equation (\ref{exeq.}) can be treated as a standard Lyapunov equation for $E$ symmetric and
positive definite.
This is a recognized hard problem for the standard Krylov subspace, therefore the extended Krylov subspace is applied,
and convergence is checked at each iteration.  Table~\ref{tab2} collects 
the results. 
In spite of the $52$ iterations needed to converge, the space dimension is large, that is
 $\mbox{dim}\left(\mathbf{EK}^\square_m(A,C)\right)=728$ and the memory-saving strategy of Section~\ref{Extended} may be attractive;
it was not used for this specific example, but it can be easily implemented.
 The gain in the evaluation of the residual norm is still remarkable, but
% the expensive Extended Lanczos procedure makes our strategy 
less impressive from the global point of view. 
 Indeed, the basis construction represents the majority of the computational efforts;
 in particular, the linear solves $A^{-1}\mathcal{V}_i^{(2)}$, $i=1,\ldots,52$, required 17.60 seconds.
 
\setlength\arrayrulewidth{0.4pt}
 \begin{table}[!ht]
 \centering
  %\captionsetup{justification=centering,font=scriptsize ,labelfont=bf,textfont=it}
\caption{Example \ref{RAIL}. CPU times and gain percentages. \label{tab2}}
\begin{tabular}{|r| r c r c|}
 \hline
 & time res  & gain &time tot & gain \\
 &  (secs) & gain & (secs)&  \\
 \hline
 {\tt lyap} & $11.25$ &$75.9$\%  &$75.53$ &$7.7$\%  \\
%\hline
 {\tt lyapchol} & $6.05$ & $55.2$\% &$70.76$ &$1.5$\%  \\
%\hline
 {\tt lyap2}  & $6.68$&$59.4$\%& $73.01$&$4.5$\% \\
% \hline
 {\tt cTri} &$\mathbf{2.71}$ & $\curvearrowswne$ &$\mathbf{69.70}$&$\curvearrowswne$\\
\hline 
\end{tabular}
 \end{table}

 }
\end{example}

\vskip 0.1in
\begin{example}\label{ex:IGA}
{\rm
 In this example, we compare the standard and the extended Krylov approaches again for solving
the standard Lyapunov equation. We consider %equation (\ref{Lyapeq.}) where
 the matrix $A\in\mathbb{R}^{n\times n}$, $n=39304$, coming from the discretization by isogeometric analysis (IGA)
 of the 3D Laplace operator on the unit cube $[0,1]^3$ with zero Dirichlet boundary conditions and a uniform mesh.
Since high degree B-splines are employed as basis functions (here the degree is 4 but higher values are also common),
 this discretization method yields denser stiffness and mass matrices than 
 those typically obtained by low degree 
finite element or finite difference methods; in our experiment, $1.5$\% of the components of $A$ is nonzero.
 See, e.g., \cite{Cottrell2009} for more details on IGA. 

 For the right-hand side we set $C=\mathtt{rand}(n,s)$, $s=3,$ 8, $C=C/\|C\|_F$. 
 In the standard Krylov method the residual norm is computed every 20 iterations.
 The convergence can be checked every $d$ iterations in the extended approach as well, with $d$ moderate
to avoid excessive wasted solves with $A$ at convergence \cite{Simoncini2007}. In our experiments
the computation of the residual norm only takes a small percentage 
of the total execution time and we can afford taking $d=1$.
In both approaches, the residual norm is computed by Algorithm~3.  Table \ref{IGA} collects the results.
\setlength\arrayrulewidth{0.4pt}

 \begin{table}[htb]
%\scriptsize
 \centering
 %\captionsetup{justification=centering,font=scriptsize ,labelfont=bf,textfont=it}
\caption{Example \ref{ex:IGA}. Performance comparison of Standard and Extended Krylov methods. \label{IGA}}
\begin{tabular}{|r|r r r r r r|}
 \hline
 &  $m$ &whole $V_m$& reduced & time res  & two-pass  &time tot  \\
 &   & mem. alloc.& mem. alloc.&  (secs) &  (secs) & (secs) \\
 \hline
 &\multicolumn{6}{|c|}{$s=3$} \\
\hline
 St. Krylov & 280& 840 &9  &1.59 &20.75&44.56 \\
%\hline
 Ex. Krylov & 30 &180& 180 &0.09 & - & 85.54  \\
\hline
 &\multicolumn{6}{|c|}{$s=8$} \\
 \hline
 St. Krylov & 260& 2080 &24 &3.84&45.35&93.49 \\
%\hline
 Ex. Krylov & 27 &216& 216 &0.57 &- & 347.99  \\
 \hline
\end{tabular}
 \end{table}

 The standard Krylov method generates a large space to converge for both values of $s$. 
 Nonetheless, the two-pass strategy allows us to store only 9 basis vectors for $s=3$ and
 24 basis vectors for $s=8$. 
 This feature may be convenient if storage of the whole solution process needs to be allocated in advance.
By checking the residual norm every 20 iterations, the standard Krylov method becomes competitive
with respect to the extended procedure, which is in turn 
penalized by the system solutions with dense coefficient matrices.
Indeed, for $s=3$ the operation
$A^{-1}\mathcal{V}_i^{(2)}$ for $i=1,\ldots,30$ takes 32.75 secs,
that is 38.29\% of the overall
execution time required by the extended Krylov subspace method.
Correspondingly, for $s=8$ the same operation performed during 27 iterations takes 152.92 secs,
that is, 44.94\% of the overall execution time.
This example emphasizes the potential of the enhanced classical approach when system solves are costly,
in which case rational methods pay a higher toll.
 }
 \end{example}

\vskip 0.1in
 \begin{example} \label{SylvLarge}
{\rm
In this example, a Sylvester equation (\ref{Sylveq.}) is solved. The coefficient matrices $A,B\in\mathbb{R}^{n\times n}$,
$n=16384$, come from the discretization by centered finite differences of the 
partial differential operators 
$$ 
{\cal L}_A(u)=(e^{-xy}u_x)_x+(e^{xy}u_y)_y \quad\mbox{and}\quad 
{\cal L}_B(u)=\left(\sin(xy)u_x\right)_x+\left(\cos(xy)u_y\right)_y,
$$
on $[0,1]^2$ with zero Dirichlet boundary conditions.
The right-hand side is a uniformly distributed random matrix where $C_1=\mathtt{rand}(n,s)$, $C_1=C_1/\|C_1\|_F$ and 
$C_2=\mathtt{rand}(n,s)$, $C_2=C_2/\|C_2\|_F$, $s=3$, 8. Since both $A$ and $B$ are large, equation (\ref{Sylveq.})
is solved by the standard Krylov method presented in Section \ref{The Sylvester case} and 
217 iterations are needed to converge for $s=3$, and 145 iterations for $s=8$.
 The residual norm is checked at each iteration and Table~\ref{ex64} collects the results.
 Two approximation spaces, $\mathbf{K}^\square_m(A,C_1)=\mbox{Range}(V_m)$, $\mathbf{K}^\square_m(B,C_2)=\mbox{Range}(U_m)$,
 are generated and a two-pass strategy is employed to cut down the storage demand.
 See Table \ref{ex64.2}.
 %The whole solution process requires $6s=18$ long vectors instead of
%$2sm=1302$, as it would be the case if the whole basis were retained. The second pass
%requires 2.62 seconds, which is a small portion of the total CPU time.

% Table \ref{tab64.1} shows that this procedure drastically reduces the 
%memory requirements of the whole solution process.
\setlength\arrayrulewidth{0.4pt}
\begin{table}[htb]
\begin{center}
\caption{Example \ref{SylvLarge}. CPU times and gain percentages. \label{ex64}}
\begin{tabular}{|r|r c r c|}
\hline
 & time res & gain &time tot & gain \\
 & (secs) &       & (secs)&     \\
 \hline
 &\multicolumn{4}{|c|}{$s=3$ (217 its)}  \\
 \hline
 {\tt lyap} & $60.19$ & $83.6$\%&$65.32$&$76.2$\%  \\
%\hline
%\hline
 {\tt lyap2}  & $74.05$&$86.6$\%& $78.08$&$80.1$\%  \\
% \hline
 {\tt cTri} &{\bf 9.89} &$\curvearrowswne$ &$\mathbf{15.51}$&$\curvearrowswne$\\
\hline 
&\multicolumn{4}{|c|}{$s=8$ (145 its)} \\
\hline
 {\tt lyap} & $201.28$& $88.7$\%&$208.93$&$81.5$\%  \\
%\hline
%\hline
 {\tt lyap2}  & $140.92$&$83.8$\%&$149.95$&$74.2$\% \\
% \hline
 {\tt cTri} &{\bf 22.74} &$\curvearrowswne$ &{\bf 38.65}&$\curvearrowswne$\\
\hline 
\end{tabular}
\end{center}
\end{table}

\setlength\arrayrulewidth{0.4pt}
\begin{table}[ht!]
\begin{center}
 %\captionsetup{justification=centering,font=scriptsize ,labelfont=bf,textfont=it}
\caption{Example \ref{SylvLarge}. Memory requirements with and without full storage, and CPU time of the second Lanczos sweep.
 \label{ex64.2}}
\begin{tabular}{|rrr r r r|} \hline
  &  &  & memory &  reduced  & CPU time  \\
  &  &  & whole $V_m$, $U_m$  & mem. alloc. & (secs) \\
 $n$&$s$ &$m$ & $2s\cdot m$&  $6s$& \\
\hline
16384& 3 & 217& 1032 & 18& 2.62 \\
 16384& 8& 145&2320& 48& 4.93\\
 \hline
\end{tabular}
\end{center}
\end{table}

%\setlength\arrayrulewidth{0.4pt}
%\begin{table}[ht!]
%\begin{center}
% %\captionsetup{justification=centering,font=scriptsize ,labelfont=bf,textfont=it}
%\caption{Example \ref{SylvLarge}. Memory requirements with and without full storage, and CPU time of the second Lanczos sweep.
% \label{tab64.1}}
%\begin{tabular}{|rrr r r r|} \hline
%  &  &  & memory   &   reduced  & CPU time  \\
%  &  &  &  whole $V_m$,$U_m$  & mem. alloc. &  (secs) \\
% $n$&$s$ &$m$ & $2s\cdot m$ &  $6s$& \\
%\hline
%16384 & 3 & 217  & 1302 & 18 & 2.62 \\
% \hline
%\end{tabular}
%\end{center}
%\end{table}
 }
\end{example}

\vskip 0.1in
 \begin{example}\label{ex:Sylvester}
  {\rm
  In this last example, we again consider the Sylvester problem (\ref{Sylveq.}), this time stemming from
 the 3D partial differential equation 
 \begin{equation}\label{operator}
\left(e^{-xy}u_x\right)_x+\left(e^{xy}u_y\right)_y+10u_{zz}=f \quad\mbox{on}\;[0,1]^3,  
 \end{equation}
 %%
 %As in Example \ref{ex:IGA}, the discretization
  %by IGA of the Poisson equation
 %%
%  \begin{equation}\label{operator}
%-\Delta u=f \quad \mbox{on}\; [0,1]^3,  
%  \end{equation}
  %%
  with zero Dirichlet boundary conditions.
  Thanks to the regular domain, 
 its discretization by centered finite differences can be represented by the Sylvester equation
  \begin{equation}\label{SylvEx}
 A X+X B=F, 
  \end{equation}
   where $A\in\mathbb{R}^{n^2\times n^2}$ accounts for the discretization
in the $x, y$ variables, while $B\in\mathbb{R}^{n\times n}$ is associated with the $z$ variable.
   The right-hand side $F\in\mathbb{R}^{n^2\times n}$ takes into account the source term $f$ in agreement
with the space discretization.  %in (\ref{operator}).  {\color{blue} more details about \ref{SylvEx}?}
  See \cite{Palitta2015} for a similar construction. % concerning the convection-diffusion differential operator. 
  %Since $M$ is nonsingular, right multiplication by $M^{-1}$ and left multiplication by 
  %$(M\otimes M)^{-1}=M^{-1}\otimes M^{-1}$ make (\ref{SylvEx}) a standard Sylvester equation.
  
  In our experiment, $n=148$ (so that $n^2=21904$)
 and equation (\ref{SylvEx}) falls into the case addressed in Section~\ref{LargeAsmallB}.
  The right-hand side is $F=-C_1C_2^T$ where $C_1$,$C_2$ are two different normalized random matrices,
  $C_j=\mathtt{rand}(n,s)$, $C_j=C_j/\|C_j\|_F$, $j=1,2$, and $s=3$, 8.
% and $C_2=\mathtt{rand}(n,s)$,
% $C_2=C_2/\|C_2\|_F$, where $s=3$. 
  Convergence is checked at each iteration and Table \ref{tab4} collects the results. 
  \setlength\arrayrulewidth{0.4pt}
 \begin{table}[!ht]
 \centering
  %\captionsetup{justification=centering,font=scriptsize ,labelfont=bf,textfont=it}
\caption{Example \ref{ex:Sylvester}. CPU times and gain percentages. \label{tab4}}
\begin{tabular}{|r| r c r c|}
 \hline
 & time res  & gain &time tot & gain \\
 &  (secs) & gain & (secs)&  \\
 \hline
 &\multicolumn{4}{|c|}{$s=3$ (190 its)} \\
\hline
 {\tt lyap} & $15.47$ &$75.7$\%  &$17.88$ &$63.6$\%  \\
%\hline
 %{\tt lyapchol} & $7.84$ & $32.5$\% &$83.01$ &$5.3$\%  \\
%\hline
 {\tt lyap2}  & $25.35$&$85.2$\%& $27.50$&$76.3$\% \\
% \hline
 {\tt cTri} &$\mathbf{3.76}$ & $\curvearrowswne$ &$\mathbf{6.51}$&$\curvearrowswne$\\
\hline 
&\multicolumn{4}{|c|}{$s=8$ (150 its)} \\
\hline
 {\tt lyap} & $36.99$& $68.2$\%&$40.90$&$60.0$\% \\
%\hline
 {\tt lyap2}  & $77.04$&$84.7$\%& $80.91$&$79.8$\%\\
% \hline
 {\tt cTri} &{\bf 11.77} &$\curvearrowswne$ &{\bf 16.35}&$\curvearrowswne$\\
\hline 
\end{tabular}
 \end{table}
 The method requires $190$ iterations to converge below $10^{-6}$ for $s=3$ and 150 for $s=8$,
and a two-pass strategy allows us to avoid the storage of the whole basis
$V_m\in\mathbb{R}^{n^2\times sm}$. See Table \ref{ex65.2}.

%Therefore the algorithm goes from $ms=558$ to $3s=9$
%memory allocations for the long basis vectors during the whole process. The second
%pass costs only 1 second of CPU time.%
%
\setlength\arrayrulewidth{0.4pt}
\begin{table}[ht!]
\begin{center}
 %\captionsetup{justification=centering,font=scriptsize ,labelfont=bf,textfont=it}
\caption{Example \ref{SylvLarge}. Memory requirements with and without full storage, and CPU time of the second Lanczos sweep.
 \label{ex65.2}}
\begin{tabular}{|rrr r r r|} \hline
  &  &  & memory  &  reduced  & CPU time  \\
  &  &  &  whole $V_m$   & mem. alloc. & (secs) \\
 $n^2$&$s$ &$m$ & $s\cdot m$&  $3s$& \\
\hline
21904& 3 & 190 & 570 & 9&0.93 \\
 21904& 8& 150&1200& 24& 1.31\\
 \hline
\end{tabular}
\end{center}
\end{table}

%Table \ref{tab5} shows how this procedure reduces the memory requirement of the solution
%process at a very low cost.
% \setlength\arrayrulewidth{0.4pt}
%\begin{table}[ht!]
%\begin{center}
% %\captionsetup{justification=centering,font=scriptsize ,labelfont=bf,textfont=it}
%\caption{Example \ref{ex:Sylvester}. Memory requirements with and without full storage, and CPU time of the second Lanczos sweep.
% \label{tab5}}
%\begin{tabular}{|rrr r r r|}
%\hline
%  &  &  & memory   &   reduced  & CPU time  \\
%  &  &  &  whole $V_m$  & mem. alloc. &  (secs) \\
% $n^2$&$s$ &$m$ & $s\cdot m$ &  $3s$& \\
%\hline
%21904 & 3 & 186  & 558 & 9 & 1.00 \\
%% 21904 & 4 & 240 & 960 & 12 & 1.67 \\
% \hline
%\end{tabular}
%\end{center}
%\end{table}
  }
 \end{example}

%%%%%%%%%%%%%%%%%%%%%%%%%%%%%%%%%%%%%%%%%%%%%%%%%%%%%%%%%%%%%%%%%%%%%%%%%%%%%%%%%%%%%%%%%%%%%%%%%%%%%%%%%%%%%%%%%%%%
\section{Conclusions}\label{Conclusions}
We have presented an expression for the residual norm that significantly reduces
the cost of monitoring convergence in
projection methods based on $\mathbf{K}_m^\square$ and $\mathbf{EK}_m^\square$ for Sylvester
and Lyapunov equations and symmetric data.
For the standard Krylov approach, the combination with a two-pass strategy makes this classical
algorithm appealing compared with recently developed methods, 
 both in terms of computational costs and memory requirements, 
whenever data do not allow for cheap system solves. The proposed enhancements 
rely on the symmetric block tridiagonal structure of the projected matrices. In case
this pattern does not arise, as is the case for instance in the nonsymmetric setting,
different approaches must be considered.

\section*{Acknowledgements}
We thank Mattia Tani for providing us with the data of Example \ref{ex:IGA}.
%The proposed procedure is then adapted to Sylvester equations with symmetric coefficient matrices.
\bibliography{%
%/home/valeria/Dropbox/PalittaSimoncini/TESI/bozza_tesi/BIBTEX,%
/home/davide/Scrivania/BIBLIOGRAFIA/BIBTEX}

\begin{thebibliography}{10}

\bibitem{Antoulas.05}
{\sc A.~C. Antoulas}, {\em Approximation of large-scale dynamical systems},
  vol.~6 of Advances in Design and Control, Society for Industrial and Applied
  Mathematics (SIAM), Philadelphia, PA, 2005.

\bibitem{Bartels1972}
{\sc R.~H. Bartels and G.~W. Stewart}, {\em {A}lgorithm 432: {S}olution of the
  {M}atrix {E}quation {$AX+XB=C$}}, Comm. {ACM}, 15 (1972), pp.~820--826.

\bibitem{Breiten2014}
{\sc T.~Breiten, V.~Simoncini, and M.~Stoll}, {\em Low-rank solvers for
  fractional differential equations}, Electron. Trans. Numer. Anal., 45 (2016),
  pp.~107--132.

\bibitem{Cottrell2009}
{\sc J.~A. Cottrell, T.~J.~R. Hughes, and Y.~Bazilevs}, {\em Isogeometric
  Analysis: Toward Integration of CAD and FEA}, Wiley Publishing, 1st~ed.,
  2009.

\bibitem{Dhillon1997a}
{\sc I.~S. Dhillon}, {\em A new {O}($n^2$) algorithm for the symmetric
  tridiagonal eigenvalue/eigenvector problem}, PhD thesis, University of
  California, Berkeley, 1997.

\bibitem{Druskin.Simoncini.11}
{\sc V.~Druskin and V.~Simoncini}, {\em Adaptive rational {K}rylov subspaces
  for large-scale dynamical systems}, Systems Control Lett., 60 (2011),
  pp.~546--560.

\bibitem{Frommer2008}
{\sc A.~Frommer and V.~Simoncini}, {\em Stopping criteria for rational matrix
  functions of {H}ermitian and symmetric matrices}, SIAM J. Sci. Comput., 30
  (2008), pp.~1387--1412.

\bibitem{Gutknecht2006}
{\sc M.~H. Gutknecht}, {\em {K}rylov subspace algorithms for systems with
  multiple right hand sides: an introduction},  (2006).
\newblock Available at {\tt http://www.sam.math.ethz.ch/\textasciitilde
  mhg/pub/delhipap.pdf}.

\bibitem{Hammarling1982}
{\sc S.~J. Hammarling}, {\em Numerical solution of the stable, nonnegative
  definite {L}yapunov equation}, IMA J. Numer. Anal., 2 (1982), pp.~303--323.

\bibitem{Jagels2009}
{\sc C.~Jagels and L.~Reichel}, {\em The extended {K}rylov subspace method and
  orthogonal {L}aurent polynomials}, Linear Algebra Appl., 431 (2009),
  pp.~441--458.

\bibitem{Jaimoukha1994}
{\sc I.~M. Jaimoukha and E.~M. Kasenally}, {\em Krylov subspace methods for
  solving large {L}yapunov equations}, SIAM J. Numer. Anal., 31 (1994),
  pp.~227--251.

\bibitem{Kressner2008}
{\sc D.~Kressner}, {\em {Memory-efficient Krylov subspace techniques for
  solving large-scale Lyapunov equations}}, in {IEEE International Symposium on
  Computer-Aided Control Systems}, San Antonio, 2008, pp.~613--618.

\bibitem{Mertens2015}
{\sc C.~Mertens and R.~Vandebril}, {\em Short recurrences for computing
  extended {K}rylov bases for {H}ermitian and unitary matrices}, Numer. Math.,
  131 (2015), pp.~303--328.

\bibitem{Palitta2015}
{\sc D.~Palitta and V.~Simoncini}, {\em Matrix-equation-based strategies for
  convection-diffusion equations}, BIT, 56 (2016), pp.~751--776.

\bibitem{Penzl2000a}
{\sc T.~Penzl}, {\em A cyclic low-rank {S}mith method for large sparse
  {L}yapunov equations}, SIAM J. Sci. Comput., 21 (2000), pp.~1401--1418.

\bibitem{Penzl2000}
\leavevmode\vrule height 2pt depth -1.6pt width 23pt, {\em Eigenvalue decay
  bounds for solutions of {L}yapunov equations: the symmetric case}, Systems
  Control Lett., 40 (2000), pp.~139--144.

\bibitem{Saad2003}
{\sc Y.~Saad}, {\em Iterative methods for sparse linear systems}, {SIAM},
  Society for Industrial and Applied Mathematics, Philadelphia, PA, 2nd~ed.,
  2003.

\bibitem{Sabino2006}
{\sc J.~Sabino}, {\em Solution of {L}arge-{S}cale {L}yapunov {E}quations via
  the {B}lock {M}odified {S}mith {M}ethod}, PhD thesis, Rice {U}niversity,
  2006.

\bibitem{Simoncini2007}
{\sc V.~Simoncini}, {\em A new iterative method for solving large-scale
  {L}yapunov matrix equations}, SIAM J. Sci. Comput., 29 (2007),
  pp.~1268--1288.

\bibitem{Simoncini2014}
\leavevmode\vrule height 2pt depth -1.6pt width 23pt, {\em Computational
  methods for linear matrix equations}, SIAM Rev., 58 (2016), pp.~377--441.

\bibitem{Snyders1970}
{\sc J.~Snyders and M.~Zakai}, {\em On nonnegative solutions of the equation
  {$AD+DA^{\prime} =-C$}}, SIAM J. Appl. Math., 18 (1970), pp.~704--714.

\bibitem{Wachspress1963}
{\sc E.~L. Wachspress}, {\em Extended application of alternating direction
  implicit iteration model problem theory}, J. Soc. Indust. Appl. Math., 11
  (1963), pp.~994--1016.

\bibitem{Watkins2007}
{\sc D.~S. Watkins}, {\em The Matrix Eigenvalue Problem: GR and Krylov Subspace
  Methods}, {SIAM}, Society for Industrial and Applied Mathematics,
  Philadelphia, PA, USA, 1st~ed., 2007.

\end{thebibliography}
\end{document}